\newtheorem{thm}{Theorem}
\newtheorem{example}{Example}
\newtheorem{definition}{Definition}
\newtheorem*{remark}{Remark}
\newtheorem{lemma}{Lemma}
\newtheorem{corollary}{Corollary}
\begin{document}
\title{Determining Singularities Using Row Sequences of Pad\'{e}-orthogonal Approximants}
\author{N. Bosuwan\footnote{Results of this article constitute a part of this author's Ph.D. dissertation under the mentorship of E.B. Saff at Vanderbilt University.}  , G. L\'opez Lagomasino\footnote{The research of this author was supported by Ministerio de Econom\'ia y Competitividad under grant MTM2012-36372-C03-01.}, and E.B. Saff\footnote{The research of this author was supported, in part, by the U.S. National Science Foundation under grants DMS-0808093 and DMS-1109266.}   }
\maketitle


\section*{Abstract}

Starting from the orthogonal polynomial expansion of a function $F$ corresponding to a finite positive Borel measure with infinite compact support, we study the asymptotic behavior of certain associated rational functions (Pad\'{e}-orthogonal approximants). We obtain both direct and inverse results relating the convergence of the poles of the approximants and the singularities of $F.$ Thereby, we obtain analogues of the theorems of E. Fabry, R. de Montessus de Ballore, V.I. Buslaev, and S.P. Suetin.

 \section*{Keywords}{Pad\'e approximants, orthogonal polynomials, Fabry's theorem, Montessus de Ballore's theorem}

 \section*{2000 MSC.}
Primary 30E10, 41A27; Secondary 41A21.

\section{Introduction}

Let $E$ be an infinite compact subset of the complex plane $\mathbb{C}$ such that $\overline{\mathbb{C}}\setminus E$ is simply connected. There exists a unique exterior conformal representation $\Phi$ from $\overline{\mathbb{C}}\setminus E$ onto $\overline{\mathbb{C}}\setminus \{w: |w|\leq 1\}$ satisfying $\Phi(\infty)=\infty$ and $\Phi'(\infty)>0.$ We assume that $E$ is such that the inverse function $\Psi =\Phi^{-1}$ can be extended continuously to $\overline{\mathbb{C}}\setminus \{w: |w|< 1\}$  (the closure of a bounded Jordan region and a finite interval satisfy the above  conditions). Unless otherwise stated, $E$ will be as described above.

Let $\mu$ be a finite positive Borel measure with infinite support $\mbox{supp}(\mu)$ contained in $E$. We write $\mu \in \mathcal{M}(E)$ and define the associated inner product,
$$\langle g,h \rangle_{\mu}:=\int g(\zeta) \overline{h(\zeta)} d\mu(\zeta), \quad g,h \in L_2(\mu).$$
Let
$$p_{n}(z):=\kappa_n z^n+\cdots, \quad  \kappa_n>0,\quad n=0,1,\ldots,$$ be the orthonormal polynomial of degree $n$ with respect to $\mu$ having positive leading coefficient; that is, $\langle p_n, p_m \rangle_{\mu}=\delta_{n,m}.$  Denote by $\mathcal{H}(E)$ the space of all functions holomorphic in some neighborhood of $E.$

\begin{definition}\textup{
Let $F\in {\mathcal{H}}(E),\, \mu \in \mathcal{M}(E),$ and a pair of nonnegative integers $(n,m)$ be given. A rational function $[n/m]_F^{\mu}:=P_{n,m}^{\mu}/Q_{n,m}^\mu$ is called an $(n,m)$ \emph{Pad\'{e}-orthogonal approximant}  of $F$ with respect to $\mu$ if $P_{n,m}^\mu$ and $Q_{n,m}^\mu$ are polynomials satisfying
\begin{equation}\label{pade2}
\deg(P_{n,m}^\mu) \leq n, \quad \deg(Q_{n,m}^{\mu})\leq m,\quad Q_{n,m}^{\mu}\not\equiv 0,
\end{equation}
\begin{equation}\label{pade3}
\langle Q_{n,m}^{\mu} F-P_{n,m}^{\mu}, p_j \rangle_{\mu}=0, \quad \textup{for $j=0,1,\ldots,n+m.$}
\end{equation}
Since $Q_{n,m}^{\mu} \not\equiv 0$, we normalize it to have leading coefficient equal to $1$.}
\end{definition}

When $E=\{z\in \mathbb{C}: |z|\leq 1\}$ and $d\mu=d\theta/2\pi$ on the boundary of $E$, then $p_n(z) = z^n$ and the Pad\'{e}-orthogonal approximants reduce to the classical Pad\'{e} approximants.  In this case, we write $P_{n,m}, Q_{n,m}$
and $[n/m]_F$, respectively.

The study of the convergence properties of row sequences of Pad\'e approximants (when $m$ is fixed and $n \to \infty$) has a long history beginning with the classical results of J. Hadamard \cite{Had}, R. de Montessus de Ballore \cite{Mon}, and E. Fabry \cite{Fab}. These results have attracted considerable attention motivating different extensions  and generalizations to other approximation schemes using rational functions in which the degree of the denominator remains bounded as $n \to \infty$ (see, for example, \cite{Vib2009}, \cite{CCL}, \cite{Gonchar1}, \cite{Aag81}, \cite{Monte2}, \cite{Monte1}, \cite{Saffpade}, \cite{Sutinpade}, \cite{Sutinpade1}, \cite{sue1}, \cite{sue2}, \cite{Suetin5}). For the case of measures supported on the real line and the unit circle, some results in this direction are contained in \cite{Vib2009}, \cite{CCL0}, \cite{Sutinpade}, and \cite{Sutinpade1}. However, up to the present there are no results of this nature for measures supported on  general compact subsets $E$ of the complex plane. The object of this paper is to fill this gap.

The general theory covers direct and inverse type results. In direct results  one starts with a function for which the analytic properties and location of singularities in a certain domain is known, and using this information one draws conclusions about the asymptotic behavior  of the approximants and their poles. In the inverse direction, the information is given in terms of the asymptotic behavior of the poles of the approximating functions from which the analyticity and location of the singularities of the function must be deduced. We give results in both directions.

For any $\rho>1,$  set
 $$\Gamma_{\rho}:=\{z\in \mathbb{C}: |\Phi(z)|=\rho\}, \quad \quad \mbox{and} \quad \quad \gamma_{\rho}:=\{w\in \mathbb{C}:|w|=\rho\}.$$
Denote by $D_{\rho}$ the bounded connected component of the complement of $\Gamma_{\rho}$ and by $\mathbb{B}(a,\rho)$ the open disk centered at $a \in \mathbb{C}$ of radius $\rho.$ We  call  $\Gamma_\rho$ and $D_{\rho}$ a \emph{level curve} and a \emph{canonical domain} (with respect to $E$), respectively.
We denote by $\rho_0(F)$ the index $\rho(>1)$ of the largest canonical domain $D_{\rho}$ to which $F$ can be extended as a holomorphic function, and by ${\rho_m(F)}$ the index $\rho$ of the largest canonical domain $D_{\rho}$ to which $F$ can be extended as a meromorphic function with at most $m$ poles (counting multiplicities).

Let $\mu \in \mathcal{M}(E)$ be such that
\begin{equation}\label{radius3}
\lim_{n \rightarrow \infty} |p_n(z)|^{1/n}=|\Phi(z)|,
\end{equation}
uniformly inside $\mathbb{C}\setminus E.$ Such measures are called \textit{regular} (cf.  \cite{totik}).  Here and in what follows, the phrase ``uniformly inside a domain'' means ``uniformly on each compact subset of the domain''.  The Fourier coefficient of $F$ with respect to $p_n$ is given by
\begin{equation}\label{Fourierco}
F_n:=\langle F,p_n\rangle_\mu =\int F(z) \overline{p_n(z)}d\mu(z).
\end{equation}
As for Taylor series (see, for example, \cite[Theorem 6.6.1]{totik}),  it is easy to show that
$$\rho_0(F)=\left(\limsup_{n \rightarrow \infty} |F_n|^{1/n} \right)^{-1}.$$
Additionally, the series $\sum_{n=0}^{\infty} F_n p_n(z)$ converges to $F(z)$ uniformly inside  ${D}_{\rho_{0}(F)}$ and diverges pointwise for all $z\in \mathbb{C}\setminus \overline{D_{\rho_0(F)}}.$
Therefore,
if \eqref{radius3} holds, then $$Q_{n,m}^{\mu}(z)F(z)-P_{n,m}^{\mu}(z)=\sum_{k=n+m+1}^{\infty} \langle Q_{n,m}^{\mu}F,p_k\rangle_{\mu}\,p_k(z)$$
for all $z\in D_{\rho_{0}(F)}$ and $P_{n,m}^{\mu}=\sum_{k=0}^{n} \langle Q_{n,m}^{\mu} F,p_k\rangle_{\mu}\,p_k$ is uniquely determined by $Q_{n,m}^{\mu}.$

In contrast with classical Pad\'e approximants, the rational function $[n/m]_{F}^{\mu}$ may not be unique as the following example shows.

 \begin{example}\textup{ Let $E=[-1,1],$ $d\mu=dx/\sqrt{1-x^2}$ and $$F(x)=\frac{37}{x-3}+\sum_{k=0}^{4} c_k p_k(x),$$
 where the $p_k$ are normalized Chebyshev polynomials, and $$c_0:=37, \quad  c_1:=6(-271 \sqrt{\pi}+192 \sqrt{2\pi}), \quad c_2:=-\sqrt{2}+315\sqrt{\pi}-222\sqrt{2\pi},$$
 $$c_3:=3513 \sqrt{\pi}-2484 \sqrt{2\pi}, \quad c_4:= \sqrt{2}+10674 \sqrt{\pi}-7548\sqrt{2\pi}.$$
  Using the program Mathematica it is easy to check that both $Q_{1,2}^{\mu}(x):=x,$ and $Q_{1,2}^{\mu}(x) = (x-3)^2$ satisfy $\langle Q_{1,2}^{\mu}F,p_k\rangle_\mu =0, k=2,3$.  These denominators $Q_{1,2}^\mu$ give us
 \[
 [1/2]_F^{\mu}(x)=\frac{4756\sqrt{\pi}-3363\sqrt{2\pi}-36\sqrt{2\pi}x+144x}{4\sqrt{\pi}x},
 \]
 and
 \[
 [1/2]_F^{\mu}(x)=\frac{1404-28536 \sqrt{\pi}+19827 \sqrt{2\pi}-864x+90364\sqrt{\pi}x-63681 \sqrt{2\pi}x}{ 4\sqrt{\pi}(x-3)^2},
 \]
respectively, which are clearly disinct.}
\end{example}

It is easy to see, however, that the condition
\begin{eqnarray}\label{averbvkaer}
\Delta_{n,m}(F,\mu):=
\begin{vmatrix}
  \langle F, p_{n+1} \rangle_{\mu} &  \langle zF,p_{n+1}\rangle_{\mu} &  \cdots & \langle z^{m-1}F,p_{n+1}\rangle_{\mu} \\
   \vdots   & \vdots    & \vdots &  \vdots  \\
  \langle F, p_{n+m} \rangle_{\mu} & \langle z F,p_{n+m} \rangle_{\mu} &  \cdots &   \langle z^{m-1} F,p_{n+m}\rangle_{\mu} \\
 \end{vmatrix}\not=0
 \end{eqnarray}
and the condition that every solution of (\ref{pade2})-(\ref{pade3}) has $\deg Q_{n,m}^{\mu} = m$ are equivalent. In turn, they imply the uniqueness of $[n/m]_F^{\mu}$.

An outline of the paper is as follows. In Section 2, we state our main results and comment on their connection with classical and recent developments of the theory. Theorem 1 is a direct result whereas Theorems 2-6 are of inverse type. Section 3 is devoted to the proof of Theorem 1. Theorems 2-6 are proved in Section 4.


\section{Main results}
We will make the following assumptions on the asymptotic behavior of the sequence of orthonormal polynomials with respect to a given measure $\mu \in \mathcal{M}(E)$. We write $\mu \in \mathcal{R}(E)$ when the corresponding sequence of orthonormal polynomials has ratio asymptotics; that is,
\begin{equation}\label{ratioasym}
\lim_{n \rightarrow \infty} \frac{p_n(z)}{p_{n+1}(z)}=\frac{1}{\Phi(z)},
\end{equation}
We say that Szeg\H{o} or strong asymptotics takes place, and write $\mu \in \mathcal{S}(E)$, if
\begin{equation}\label{szegoasym}
\lim_{n \rightarrow \infty} \frac{p_n(z)}{c_n \Phi^{n}(z)}=S(z) \quad \textup{and} \quad  \lim_{n\rightarrow \infty} \frac{c_n}{c_{n+1}}=1,
\end{equation}
The first limit in (\ref{szegoasym}) and the one in (\ref{ratioasym}) are assumed to hold uniformly inside  $\overline{\mathbb{C}}\setminus E$, the $c_n$'s are are positive constants, and
$S(z)$ is some holomorphic and non-vanishing function on $\overline{\mathbb{C}}\setminus E.$  Obviously, \eqref{szegoasym} $\Rightarrow$ \eqref{ratioasym} $\Rightarrow$  \eqref{radius3}.

Our first result is the direct type


\begin{thm}\label{montessusana}
 Suppose $F\in \mathcal{H}(E)$ has poles of total multiplicity exactly $m$ in $D_{\rho_{m}(F)}$ at the (not necessarily distinct) points $\lambda_1,\ldots,\lambda_m$ and let $\mu \in \mathcal{R}(E)$. Then, $[n/m]_{F}^{\mu}$ is uniquely determined for all sufficiently large $n$ and the sequence converges uniformly to $F$ inside $D_{\rho_m(F)}\setminus \{\lambda_1,\ldots,\lambda_m\}$ as $n \rightarrow \infty.$ Moreover, for any compact subset $K$ of $D_{\rho_m(F)}\setminus\{\lambda_1,\ldots,\lambda_m\},$
\begin{equation}\label{supnroot}
\limsup_{n\rightarrow \infty} \|F-[n/m]_{F}^{\mu}\|^{1/n}_{K}\leq \frac{\max\{|\Phi(z)|:z\in K\}}{\rho_m(F)},
\end{equation}
where $\|\cdot \|_{K}$ denotes the sup-norm on $K$ and if $K\subset E,$ then $\max\{|\Phi(z)|:z\in K\}$ is replaced by $1.$ Additionally,
\begin{equation} \label{convzeros}
\limsup_{n\rightarrow \infty} \|Q_{n,m}^{\mu} - Q_m\|^{1/n} \leq \frac{\max\{|\Phi(\lambda_j)|:j=1,\ldots,m\}}{\rho_m(F)}<1,
\end{equation}
where $\|\cdot\|$ denotes (for example) the coefficient norm in the space of polynomials of degree $m$ and $Q_m(z) = \prod_{k=1}^{m} (z - \lambda_k)$.
\end{thm}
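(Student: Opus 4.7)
The plan is to follow the Montessus de Ballore strategy: use $Q_m(z) := \prod_{k=1}^m(z-\lambda_k)$ to absorb the poles of $F$ in $D_{\rho_m(F)}$ and then exploit the orthogonality \eqref{pade3} to force $Q_{n,m}^\mu$ close to $Q_m$ for large $n$. I start by setting $G := Q_m F \in \mathcal{H}(D_{\rho_m(F)})$, so by the formula for $\rho_0$ recorded just before the example, its Fourier coefficients $G_k := \langle G, p_k\rangle_\mu$ satisfy $\limsup_k |G_k|^{1/k} \le 1/\rho_m(F)$. Writing $Q_{n,m}^\mu F = G + (Q_{n,m}^\mu - Q_m)F$ and applying \eqref{pade3} for $j = n+1,\dots,n+m$ yields the key identity
\begin{equation*}
\bigl\langle (Q_{n,m}^\mu - Q_m)F,\, p_{n+k}\bigr\rangle_\mu = -G_{n+k},\qquad k = 1,\dots,m,
\end{equation*}
whose right-hand side already decays at the target geometric rate.

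Next, I view this as a linear system in the coefficients of $Q_{n,m}^\mu$ (equivalently of $Q_{n,m}^\mu - Q_m$), whose matrix is the determinant $\Delta_{n,m}(F,\mu)$ from \eqref{averbvkaer}. To analyze it, I decompose $F$ into its principal parts at the $\lambda_k$ plus a remainder holomorphic in $D_{\rho_m(F)}$; the remainder contributes $O(\rho^{-n})$ for every $\rho < \rho_m(F)$ to $\langle z^j F, p_n\rangle_\mu$. The remaining contributions reduce to the asymptotics of $\langle (z-\lambda_k)^{-s}, p_n\rangle_\mu$, for which the crude rate $|\Phi(\lambda_k)|^{-n}$ is immediate (since $(z-\lambda_k)^{-s}$ is holomorphic exactly in $D_{|\Phi(\lambda_k)|}$), and the ratio asymptotics \eqref{ratioasym} further supplies $\langle(z-\lambda_k)^{-s}, p_{n+1}\rangle_\mu / \langle(z-\lambda_k)^{-s}, p_n\rangle_\mu \to 1/\Phi(\lambda_k)$, with the appropriate confluent refinement when several $\lambda_k$ coincide. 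After extracting the $\Phi(\lambda_k)^{-n}$ prefactors from each column, the leading matrix is a product of a (possibly confluent) Vandermonde in the distinct $\Phi(\lambda_k)$ with a diagonal of residues, hence nonsingular. This gives $\Delta_{n,m}(F,\mu) \ne 0$ for all large $n$; by the equivalence noted after \eqref{averbvkaer}, $[n/m]_F^\mu$ is unique and $\deg Q_{n,m}^\mu = m$.

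Having inverted the system, Cramer's rule applied to the key identity together with the bound on $G_{n+k}$ produces \eqref{convzeros}; the strict inequality $< 1$ is automatic because $\max_k|\Phi(\lambda_k)| < \rho_m(F)$. For the uniform rate \eqref{supnroot}, I use
\begin{equation*}
F - [n/m]_F^\mu \;=\; \frac{1}{Q_{n,m}^\mu}\sum_{k > n+m} \langle Q_{n,m}^\mu F, p_k\rangle_\mu\, p_k,
\end{equation*}
split the numerator via $Q_{n,m}^\mu F = G + (Q_{n,m}^\mu - Q_m)F$, bound each Fourier coefficient using the decay of $G_k$ and the rate from \eqref{convzeros}, and control $|p_k(z)|$ on compact $K \subset D_{\rho_m(F)}\setminus\{\lambda_j\}$ by $(\max_{z\in K}|\Phi(z)|)^k$ via \eqref{radius3}; the denominator is bounded below on $K$ thanks to \eqref{convzeros} and $K \cap \{\lambda_j\}=\emptyset$. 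Summing the resulting geometric series delivers \eqref{supnroot}.

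The hard part will be the refined asymptotics of $\langle (z-\lambda_k)^{-s}, p_n\rangle_\mu$: obtaining---from only the ratio asymptotics \eqref{ratioasym}, not the full Szeg\H{o} asymptotics \eqref{szegoasym}---an expansion precise enough to exhibit the (confluent) Vandermonde structure that makes $\Delta_{n,m}(F,\mu) \ne 0$. This essentially requires differentiating the pointwise ratio relation with respect to the pole location and controlling remainders uniformly in $n$, which I would justify by a normal-families/compactness argument rather than by direct computation.
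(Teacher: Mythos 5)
Your architecture is the classical Montessus one (work directly with the linear system whose matrix is $\Delta_{n,m}(F,\mu)$, split $F$ into principal parts plus a holomorphic remainder, exhibit a confluent Vandermonde), which differs from the paper's organization: the paper never analyzes $\Delta_{n,m}(F,\mu)$ directly, but instead shifts the contour defining $a_{k,n}=\langle Q_{n,m}^{\mu}F,p_k\rangle_{\mu}$ from $\Gamma_{\rho_1}$ to $\Gamma_{\rho_2}$, solves a small system for the residue coefficients $\beta_n(j,p)$ with determinant $\Lambda_n$, proves convergence in capacity, and only then gets uniqueness and $Q_{n,m}^{\mu}\to Q_m$ via Gonchar's lemma on capacity convergence. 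Your route, if completed, would actually be more economical (no capacity argument needed). However, there are two genuine gaps.

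First, the step you yourself flag as ``the hard part'' is exactly the crux, and your proposed fix does not close it. Note that $\langle (z-\lambda)^{-1},p_n\rangle_{\mu}=-s_n(\lambda)$, where $s_n$ is the second-type function, so what you need is precisely $s_{n+1}(\lambda)/s_n(\lambda)\to 1/\Phi(\lambda)$ (and its derivatives in $\lambda$) from $\mu\in\mathcal{R}(E)$ alone. A normal-families argument shows that $(p_ns_n)_n$ is normal in $\overline{\mathbb{C}}\setminus E$, but it does not identify the limit; to pin the limit down as $\Phi'/\Phi$ one must show $\int\zeta^k|p_n(\zeta)|^2d\mu(\zeta)\to\frac{1}{2\pi i}\int_{\mathbb{T}}\Psi(w)^k\,dw/w$, i.e.\ weak-$*$ convergence of $|p_n|^2d\mu$ to the equilibrium measure of $E$. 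That this follows from ratio asymptotics is a nontrivial external input (the paper invokes Simanek's theorem for it in Lemma~\ref{secondtype}); ``differentiating the pointwise ratio relation with respect to the pole location'' presupposes the very asymptotics you are trying to establish. Without this lemma your determinant analysis, and hence uniqueness, $\deg Q_{n,m}^{\mu}=m$, and \eqref{convzeros}, cannot be completed.

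Second, your derivation of \eqref{supnroot} starts from $F-[n/m]_F^{\mu}=(Q_{n,m}^{\mu})^{-1}\sum_{k>n+m}\langle Q_{n,m}^{\mu}F,p_k\rangle_{\mu}p_k$ and bounds the series term by term. The coefficients of the piece $(Q_{n,m}^{\mu}-Q_m)F$ decay in $k$ only like $\max_j|\Phi(\lambda_j)|^{-k}$ (times the small factor $\theta^n$ from \eqref{convzeros}), because $(Q_{n,m}^{\mu}-Q_m)F$ still has the poles $\lambda_j$. Hence the majorizing geometric series $\sum_k\theta^n\bigl(\max_K|\Phi|/\max_j|\Phi(\lambda_j)|\bigr)^k$ diverges on any compact $K$ with $\max_K|\Phi|\geq\max_j|\Phi(\lambda_j)|$, and the identity you start from is itself only valid in $D_{\rho_0(F)}$. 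So your argument proves \eqref{supnroot} only well inside $D_{\rho_{m-1}(F)}$, not on all of $D_{\rho_m(F)}\setminus\{\lambda_1,\dots,\lambda_m\}$. The standard repair --- and what the paper does --- is to expand $Q_m\bigl(Q_{n,m}^{\mu}F-P_{n,m}^{\mu}\bigr)$, which is holomorphic in all of $D_{\rho_m(F)}$, so that its Fourier coefficients $b_{\nu,n}$ admit uniform bounds with the correct rate $\rho_2^{-\nu}$ for $\rho_2<\rho_m(F)$; you have the ingredient $G=Q_mF$ in hand but must apply the multiplication before expanding, not after.
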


\begin{remark} When $K=E,$ the rate of convergence in \eqref{supnroot} cannot be improved; that is,
\begin{equation} \label{exact}  \limsup_{n \to \infty} \|F - [n/m]_F^{\mu}\|_E^{1/n} = \limsup_{n \to \infty} \sigma_{n,m}^{1/n} = \frac{1}{\rho_m(F)},
\end{equation}
where
\[ \sigma_{n,m} := \inf_r \|F -r\|_E,
\]
and the infimum is taken over the class of all rational functions of type $(n,m)$
\[ r(z) = \frac{a_nz^n + a_{n-1}z^{n-1} + \cdots + a_0}{b_mz^m + b_{m-1}z^{m-1} + \cdots + b_0}.
\]
We refer the reader to $\cite{Gonchar1, Ed}$ for more information on the second equality in \eqref{exact}.
\end{remark}

In the context of classical Pad\'e approximation, Theorem \ref{montessusana} is known as the Montessus de Ballore theorem (see \cite{Mon}).  In \cite[Theorem 1]{Sutinpade}, S.P. Suetin proves an analogous result for measures supported on a bounded interval of the real line and states without proof that a similar theorem may be obtained for measures supported on a continuum of the complex plane whose sequence of orthonormal polynomials and their associated second type functions have strong asymptotic behavior. The assumptions of our Theorem 1 are substantially weaker.

In the inverse direction we have the following.


\begin{thm}\label{directfabryana}
Let $F\in \mathcal{H}(E)$ and $\mu \in \mathcal{S}(E)$. If
$$\lim_{n \rightarrow \infty} \frac{F_n}{F_{n+1}}=\tau,$$
then $\Psi(\tau)$ is a singularity of $F$ and $\rho_{0}(F)=|\tau|.$
\end{thm}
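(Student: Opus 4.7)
The equality $\rho_0(F)=|\tau|$ is immediate: the ratio hypothesis forces $|F_n|^{1/n}\to 1/|\tau|$, and the Cauchy--Hadamard-type identity for $\rho_0$ recalled in the introduction gives $\rho_0(F)=|\tau|$. The substantive claim is that $\Psi(\tau)$ is a singular point of $F$ on $\Gamma_{|\tau|}$, and my strategy is to reduce it to the classical ratio theorem of Fabry for power series.

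The bridge between the orthogonal expansion of $F$ and a classical power series will be the associated functions of the second kind
\[
q_n(z):=\int\frac{p_n(\zeta)}{z-\zeta}\,d\mu(\zeta),\qquad z\notin\mathrm{supp}(\mu),
\]
together with the standard integral representation $F_n=\frac{1}{2\pi i}\int_{\Gamma_r}F(\zeta)\,q_n(\zeta)\,d\zeta$ valid for $1<r<\rho_0(F)$. Under $\mu\in\mathcal{S}(E)$, the Szeg\H{o} asymptotic for $p_n$ propagates to $q_n$ in the form $c_n S(z)\Phi^{n+1}(z)\,q_n(z)\to T(z)$ uniformly inside $\overline{\mathbb{C}}\setminus E$, with $T$ holomorphic and non-vanishing outside $E$. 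Substituting this asymptotic into the integral and passing to the variable $w=\Phi(\zeta)$ on $|w|=r$, I aim to establish
\[
c_n F_n \;=\; (1+o(1))\,G_n,\qquad n\to\infty,
\]
where $G_n$ is the $n$-th Laurent coefficient in the annulus $\{1<|w|<|\tau|\}$ of
\[
G(w):=\frac{F(\Psi(w))\,T(\Psi(w))\,\Psi'(w)}{S(\Psi(w))}.
\]

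Granted this asymptotic identity, the conclusion is short. From $F_n/F_{n+1}\to\tau$ and $c_n/c_{n+1}\to 1$ we get $c_n F_n/(c_{n+1}F_{n+1})\to\tau$, which combined with $c_n F_n=(1+o(1))G_n$ yields $G_n/G_{n+1}\to\tau$. The power series $G^{+}(w):=\sum_{n\geq 0}G_n w^n$ then has radius of convergence $|\tau|$, and the classical Fabry ratio theorem forces $w=\tau$ to be a singular point of $G^{+}$ on its circle of convergence. Since the Laurent tail $G-G^{+}$ is holomorphic on $\{|w|>1\}$, the point $\tau$ is also a singular point of $G$ itself on $\{|w|=|\tau|\}$. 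As $\Psi$, $\Psi'$, $S\circ\Psi$ and $T\circ\Psi$ are holomorphic and non-vanishing on $\{|w|>1\}$, the singularity of $G$ at $w=\tau$ must be carried by the factor $w\mapsto F(\Psi(w))$, i.e., $F$ is singular at $z=\Psi(\tau)$, as required.

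The main obstacle lies in justifying the asymptotic $c_n F_n=(1+o(1))\,G_n$. The Szeg\H{o} limit for $q_n$ is known only with an $o(1)$ error uniform on compact subsets of $\overline{\mathbb{C}}\setminus E$, while the comparison sequence $G_n$ decays geometrically like $|\tau|^{-n}$, so the error picked up by integrating on $\Gamma_r$ must be shown to be genuinely subdominant. One way to arrange this is to integrate on $\Gamma_{r_n}$ with $r_n\nearrow|\tau|$ chosen so that the uniform Szeg\H{o} error on $\Gamma_{r_n}$ beats $(r_n/|\tau|)^n$; alternatively, one invokes sharper Szeg\H{o}-type asymptotics for $q_n$ carrying a geometric error term, which are available under $\mu\in\mathcal{S}(E)$. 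This quantitative passage through the Szeg\H{o} asymptotic is where the full strength of the hypothesis $\mu\in\mathcal{S}(E)$ is used and is the technical heart of the argument.
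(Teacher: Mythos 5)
Your overall architecture --- transfer the problem to the $w$-plane via the second kind functions, obtain a ratio limit for Laurent coefficients on an annulus, invoke classical Fabry, and pull the singularity back through the non-vanishing factors $S\circ\Psi$, $\Psi'$ --- matches the paper's in outline (the paper proves $F_n/F_{n+1}\to\tau\Rightarrow f_n/f_{n+1}\to\tau$ for $f=F\circ\Psi$ and then appeals to Fabry). But the step you yourself flag as ``the technical heart,'' namely $c_nF_n=(1+o(1))G_n$, is a genuine gap, and neither of your proposed repairs closes it. The hypothesis $\mu\in\mathcal{S}(E)$ only gives $c_nS(z)\Phi^{n+1}(z)s_n(z)\to\Phi'(z)$ uniformly on compact subsets of $\overline{\mathbb{C}}\setminus E$ with \emph{no rate}; no ``sharper Szeg\H{o}-type asymptotics with a geometric error term'' are available in this class. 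The moving-contour device also fails quantitatively: writing $c_nF_n-G_n=[f(h_n-h)]_n$ and integrating over $\gamma_{r_n}$, $r_n<|\tau|$, gives an error of order $\varepsilon_n\,r_n^{-n}\max_{\gamma_{r_n}}|f|$ with $\varepsilon_n=o(1)$ only, while the target size $|c_nF_n|$ is merely $|\tau|^{-n}e^{o(n)}$, where the $e^{o(n)}$ may decay subexponentially (e.g.\ like $e^{-\sqrt n}$). To keep $(|\tau|/r_n)^n$ bounded you are forced to take $r_n=|\tau|(1-O(1/n))$, where $\max_{\gamma_{r_n}}|f|$ can blow up since $f$ is singular on $|w|=|\tau|$, and in any case an unquantified $o(1)$ cannot be shown to dominate an arbitrary subexponential factor.

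The paper avoids this difficulty entirely: it feeds the \emph{exact} relations $[f\alpha_{n,1}]_n=0$, with $\alpha_{n,1}(w)=\frac{c_n}{c_{n+1}}\frac{(F_n/F_{n+1})\,h_{n+1}(w)}{w}-h_n(w)$, into Buslaev's extension of Poincar\'e's theorem on difference equations (Lemma 3), which is engineered precisely for the situation where the coefficient functionals vanish identically while the symbols $\alpha_n$ converge only uniformly with no rate; its conclusion already delivers both $f_n/f_{n+1}\to\tau$ and the fact that $\tau$ is a singularity of $f$ with $|\tau|=R_0(\hat f)$. To complete your argument you would need to replace the asymptotic substitution by this exact-vanishing mechanism (or prove a result of Buslaev's type yourself); as written the proof does not go through. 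Two minor further points: your $q_n$ should carry $\overline{p_n(\zeta)}$ in the integrand for the representation $F_n=\frac{1}{2\pi i}\int_{\Gamma_r}Fq_n\,d\zeta$ to hold, and your opening claim $\rho_0(F)=|\tau|$ is indeed immediate.
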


For expansions in Taylor series and classical Pad\'e approximation this result reduces to Fabry's theorem (see \cite{Fab}).

If $E=\overline{\mathbb{B}}$, where $\mathbb{B}=\mathbb{B}(0,1),$ and the measure $\mu$ supported on $\mathbb{T},$ the unit circle, satisfies the Szeg\H{o} condition
\begin{equation}\label{szegocon}
\int_{0}^{2\pi} \log w(\theta) d\theta>-\infty,
\end{equation}
(where $d\mu(\theta)=w(\theta)d\theta/2\pi+d\mu_{s}(\theta)$ is  the Radon-Nikodym decomposition of $\mu$), it is well known that the orthonormal polynomials $\varphi_n(z) = \kappa_nz^n+ \cdots$ with respect to $\mu$ satisfy the Szeg\H{o} asymptotics (\ref{szegoasym}) (with $c_n=1$). In particular, this allows us to use Theorem \ref{directfabryana} to locate the first singularity  of the reciprocal of the interior Szeg\H{o} function $$S_{\text{int}}(z):=\exp\left(\frac{1}{4\pi}\int_{0}^{2\pi} \log w(\theta)\frac{e^{i\theta}+z}{e^{i \theta}-z} d\theta \right), \quad z\in \mathbb{B},$$ in terms of the  {\it Verblunsky (or Schur) coefficients} $\alpha_n$ ($\alpha_n:=-\overline{\Phi_n(0)}$).  It is well-known that the Szeg\H{o} condition (\ref{szegocon}) also implies that
$$\lim_{n \rightarrow \infty} \kappa_n=\kappa:=\textup{exp}\left\{ - \frac{1}{4 \pi}\int_{0}^{2 \pi} \log w(\theta) d\theta \right\},$$
 and \begin{equation}
\frac{1}{S{_\textup{int}}(z)}=\frac{1}{\kappa} \sum_{k=0}^{\infty} \overline{\varphi_k(0)} \varphi_k(z)\notag
\end{equation}
uniformly inside  $\mathbb{B}$ (see \cite[p. 19-20]{Geronimus}). By Theorem \ref{directfabryana}, we immediately obtain the following.


\begin{corollary}
Let $\mu$ satisfy \textup{(\ref{szegocon})} and assume that $1/S_{\textup{int}}\in \mathcal{H}(\overline{\mathbb{B}})$. Suppose that
$$\lim_{n \rightarrow \infty} \frac{\alpha_n}{\alpha_{n+1}}= \lambda.$$ Then
$\lambda$ is a singularity of $1/S_{\textup{int}}$ and $1/S_{\textup{int}}$ is holomorphic in $\mathbb{B}(0,|\lambda|).$
\end{corollary}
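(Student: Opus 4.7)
The plan is to reduce the corollary to a direct application of Theorem \ref{directfabryana}. Because $E = \overline{\mathbb{B}}$, the normalized exterior conformal map is simply $\Phi(z) = z$, its inverse is $\Psi(w) = w$, and the canonical domains are the disks $D_\rho = \mathbb{B}(0,\rho)$. Under these identifications, the two conclusions of Theorem \ref{directfabryana}---that $\Psi(\tau)$ is a singularity of $F$ and that $\rho_0(F) = |\tau|$---translate into exactly the two assertions of the corollary for $F = 1/S_{\textup{int}}$ and $\tau = \lambda$. So the task reduces to checking the hypotheses of the theorem.

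First, I would note that the Szeg\H{o} condition (\ref{szegocon}) for $\mu$ on $\mathbb{T}$ classically yields strong asymptotics (\ref{szegoasym}) of the orthonormal polynomials on $\overline{\mathbb{C}} \setminus \overline{\mathbb{B}}$ with $c_n \equiv 1$ (the limit function $S$ being holomorphic and non-vanishing on the exterior), so $\mu \in \mathcal{S}(E)$. Second, the uniformly convergent expansion stated just above the corollary,
\[
\frac{1}{S_{\textup{int}}(z)} = \frac{1}{\kappa}\sum_{k=0}^{\infty} \overline{\varphi_k(0)}\, \varphi_k(z),
\]
identifies the $n$-th Fourier coefficient of $F = 1/S_{\textup{int}}$ with respect to $\{\varphi_n\}$ as $F_n = \overline{\varphi_n(0)}/\kappa$ (uniqueness of the expansion plus the fact that $F$ is known to belong to $\mathcal{H}(\overline{\mathbb{B}})$).

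The remaining step is to translate $F_n$ into Verblunsky coefficients and compute the ratio limit. Writing $\varphi_n = \kappa_n \Phi_n$ with $\kappa_n > 0$ and using $\alpha_n = -\overline{\Phi_n(0)}$, we get $F_n = -\kappa_n \alpha_n / \kappa$. Combining this with the already-noted limit $\kappa_n \to \kappa$ and the hypothesis $\alpha_n / \alpha_{n+1} \to \lambda$ yields
\[
\lim_{n\to\infty} \frac{F_n}{F_{n+1}} = \lim_{n\to\infty} \frac{\kappa_n}{\kappa_{n+1}} \cdot \frac{\alpha_n}{\alpha_{n+1}} = \lambda.
\]
Theorem \ref{directfabryana} then applies and gives both assertions at once. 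Since every ingredient (strong asymptotics for OPUC under Szeg\H{o}'s condition, the series representation of $1/S_{\textup{int}}$, and $\kappa_n \to \kappa$) has already been assembled in the paragraph preceding the statement, there is no real obstacle; the corollary is essentially a dictionary translation of Theorem \ref{directfabryana} into the OPUC setting.
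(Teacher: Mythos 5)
Your proposal is correct and follows exactly the route the paper intends: the paper derives the corollary ``immediately'' from Theorem \ref{directfabryana} using the three facts assembled in the preceding paragraph (Szeg\H{o} asymptotics with $c_n=1$, the expansion of $1/S_{\textup{int}}$ identifying $F_n=\overline{\varphi_n(0)}/\kappa=-\kappa_n\alpha_n/\kappa$, and $\kappa_n\to\kappa$), together with $\Phi(z)=z$ for $E=\overline{\mathbb{B}}$. Your write-up simply makes explicit the ratio computation $F_n/F_{n+1}=(\kappa_n/\kappa_{n+1})(\alpha_n/\alpha_{n+1})\to\lambda$ that the paper leaves to the reader.
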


This result complements \cite[Theorem 2]{BLS} where, under stronger assumptions, it is shown that $\lambda$ is a simple pole and $1/S_{\textup{int}}$ has no other singularity in a neighborhood of $\overline{\mathbb{B}(0,|\lambda|)}.$

Using the definition of $Q_{n,1}^{\mu}$ it is easy to verify that whenever $F_{n+1}\not=0$, we have
$$Q_{n,1}^{\mu}(z) = z - \frac{\langle z F,p_{n+1} \rangle_{\mu}}{F_{n+1}}.$$
The next result enables one to locate the first singularity  of $F$ using the zeros of $Q_{n,1}^{\mu}.$


\begin{thm}\label{thefirstrow}
Let $F \in \mathcal{H}(E)$ and $\mu \in \mathcal{S}(E)$. If
$$\lim_{n \rightarrow \infty} \frac{\langle zF, p_{n} \rangle_{\mu}}{F_n}=\lambda,$$
then $\lambda$ is a singularity of $F$ and $\rho_{0}(F)=|\Phi(\lambda)|.$
\end{thm}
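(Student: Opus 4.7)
The plan is to reduce this to Theorem~\ref{directfabryana}: I will show that the hypothesis forces $F_n/F_{n+1}\to\Phi(\lambda)$, whence Theorem~\ref{directfabryana} applied with $\tau:=\Phi(\lambda)$ immediately yields that $\Psi(\tau)=\lambda$ is a singularity of $F$ and $\rho_0(F)=|\tau|=|\Phi(\lambda)|$. The starting identity comes from the finite expansion $zp_k(z)=\sum_{j=0}^{k+1}\beta_{k,j}\,p_j(z)$ with $\beta_{k,j}:=\langle zp_k,p_j\rangle_\mu$; since $\beta_{k,n}=0$ for $k<n-1$, the orthogonal expansion of $zF=\sum_k F_k\,zp_k$ gives
\[
\langle zF,p_n\rangle_\mu \;=\; \sum_{k=n-1}^{\infty} F_k\,\beta_{k,n},
\]
with leading term $F_{n-1}\kappa_{n-1}/\kappa_n$. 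Under $\mu\in\mathcal S(E)$, the relation $p_n\sim c_n S\,\Phi^n$ with $c_n/c_{n+1}\to 1$, together with $\Phi(z)\sim z/\mathrm{cap}(E)$ at infinity, yields $\kappa_{n-1}/\kappa_n\to\mathrm{cap}(E)$.

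More generally, strong asymptotics permits one to prove that, for each fixed $j\geq -1$,
\[
\beta_{n+j,n}\longrightarrow s_j\qquad(n\to\infty),
\]
where $\Psi(w)=s_{-1}w+s_0+s_1/w+s_2/w^2+\cdots$ is the Laurent expansion of $\Psi$ at infinity (so $s_{-1}=\mathrm{cap}(E)$); this expresses the fact that the infinite matrix of multiplication by $z$ in the basis $\{p_n\}$ is asymptotically Toeplitz with symbol $\Psi$. Dividing the above identity by $F_n$ and passing to the limit along any subsequence where $F_{n-1}/F_n\to\tau$ (so $F_{n+j}/F_n\to\tau^{-j}$), the hypothesis delivers
\[
\lambda \;=\; s_{-1}\tau + s_0 + \sum_{j\geq 1}s_j\,\tau^{-j} \;=\; \Psi(\tau).
\]
Since $\Psi$ is injective on $\{|w|>1\}$, this forces $\tau=\Phi(\lambda)$; because every subsequential limit of $F_{n-1}/F_n$ then equals $\Phi(\lambda)$, the full sequence converges and Theorem~\ref{directfabryana} applies.

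The main obstacle is the Tauberian step of rigorously justifying the passage to the limit and controlling the infinite tail $\sum_{j\geq 1}\beta_{n+j,n}F_{n+j}/F_n$ uniformly in $n$. For this I would combine: (i) the exponential decay $|F_k|\leq C\rho^{-k}$ for some $\rho>1$ afforded by $F\in\mathcal H(E)$, which dominates the tail by a convergent geometric series; (ii) the uniform bound $|\beta_{k,n}|\leq\max_{z\in E}|z|$ from Parseval applied to $\|zp_k\|_\mu^2=\sum_j|\beta_{k,j}|^2$; and (iii) a compactness argument showing that $F_{n-1}/F_n$ is bounded (an unbounded subsequential limit would make the dominant term of the identity blow up, contradicting $\langle zF,p_n\rangle_\mu/F_n\to\lambda$). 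Ruling out subsequential limits at $0$ and coupling the convergence of the weighted sum with that of each individual ratio, uniformly in $n$, is the most delicate part, and is precisely where the interplay of strong asymptotics with the analyticity of $F$ is indispensable.
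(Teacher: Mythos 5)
Your reduction to Theorem~\ref{directfabryana} is the right idea in spirit---the paper also deduces Theorem~\ref{thefirstrow} from Fabry's theorem---but the step where you actually prove $F_n/F_{n+1}\to\Phi(\lambda)$ has a genuine gap, and it sits exactly at the point you yourself flag as delicate. After dividing $\langle zF,p_n\rangle_\mu=\sum_{k\ge n-1}F_k\beta_{k,n}$ by $F_n$, you must pass to the limit in $\beta_{n-1,n}F_{n-1}/F_n+\beta_{n,n}+\sum_{j\ge1}\beta_{n+j,n}F_{n+j}/F_n$. Your dominations (i)--(ii) bound the tail by a constant times $\sum_{j\ge1}\rho^{-(n+j)}/|F_n|$, which is useless without a matching \emph{lower} bound on $|F_n|$; the hypothesis gives only $\limsup_n|F_n|^{1/n}=1/\rho_0(F)$, not a limit, so $\rho^{-n}/|F_n|$ can be unbounded along subsequences---this lacunary possibility is the whole difficulty of Fabry-type inverse theorems. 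Your (iii) does not repair this: an unbounded subsequential limit of $F_{n-1}/F_n$ need not make $\langle zF,p_n\rangle_\mu/F_n$ blow up, because the divergent leading term can be cancelled by a divergent tail. Moreover, the assertion that along a subsequence with $F_{n-1}/F_n\to\tau$ one also has $F_{n+j}/F_n\to\tau^{-j}$ is false: convergence of one shifted ratio along a subsequence of $n$'s says nothing about the ratios $F_{n+i}/F_{n+i+1}$, $i\ge0$, on that subsequence. Hence the identity $\lambda=\Psi(\tau)$ is not established for subsequential limits, and the convergence of the full sequence does not follow.

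This Tauberian step is precisely what the paper outsources to Buslaev's extension of Poincar\'e's theorem on difference equations. In the proof of Theorem~\ref{equivfirstpole} one sets $\alpha_{n,2}(w)=h_{n+1}(w)(\lambda_{n+1}-\Psi(w))/w$, where $h_n(w)=c_nw^{n+1}s_n(\Psi(w))\Psi'(w)$, verifies the \emph{exact} relation $[f\alpha_{n,2}]_n=0$, and invokes Lemma~\ref{Buslaev3} with $m=1$; that lemma delivers, with no a priori regularity of the coefficients $f_n$ or $F_n$, that the unique zero $\tau=\Phi(\lambda)$ of the limit function $\alpha_2(w)=h(w)(\lambda-\Psi(w))/w$ is a singularity of $f=F\circ\Psi$ with $|\tau|=R_0(\hat{f})$, which transfers to $F$ under the conformal map. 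Your asymptotically-Toeplitz observation $\beta_{n+j,n}\to s_j$ is correct (it follows from ratio asymptotics and Lemma~\ref{secondtype} by writing $\beta_{n+j,n}$ as a contour integral of $t\,p_{n+j}(t)s_n(t)$) and is essentially the same input that makes $h_n\to 1/(S\circ\Psi)$ work in the paper; but by itself it cannot replace the difference-equation machinery, and to complete your route you would need to supply an argument of comparable depth to exclude the bad subsequential behavior.
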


The proofs of Theorems \ref{directfabryana} and \ref{thefirstrow} are reduced to Fabry's theorem by  using the following result.


\begin{thm}\label{equivfirstpole}
Let  $F\in \mathcal{H}(E)$ and $\mu \in \mathcal{S}(E)$. Define $f(w):=F(\Psi(w))$ and denote the Laurent series of $f$ about $0$ by $\sum_{k=-\infty}^{\infty}f_{k} w^{k}.$ Then, the following limits are equivalent:
\begin{enumerate}
\item[$(a)$] $\lim_{n\rightarrow \infty} F_n/F_{n+1} =\tau$,
\item[$(b)$] $\lim_{n\rightarrow \infty} \langle zF, p_n \rangle_{\mu}/F_n =\lambda$,
\item[$(c)$] $\lim_{n\rightarrow \infty} f_n/f_{n+1} = \tau$,
\end{enumerate}
where $\tau$ and $\lambda$ are finite and related by the formula $\Phi(\lambda)=\tau.$
\end{thm}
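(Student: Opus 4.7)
The plan is to establish $(a)\Leftrightarrow(c)$ and $(b)\Leftrightarrow(c)$ separately, via a single technical lemma relating Fourier coefficients $F_n$ to Laurent coefficients at infinity of a canonical rescaling of $f$. Introduce $h(w):=S(\Psi(w))$; since $S$ is holomorphic and non-vanishing on $\overline{\mathbb{C}}\setminus E$ (including at infinity), both $h$ and $1/h$ are holomorphic and non-vanishing on $\{|w|\geq 1\}\cup\{\infty\}$ and expand there only in non-positive powers of $w$. Set $A(w):=f(w)/h(w)$, holomorphic in the same annulus as $f$, with Laurent coefficients $A_n$. The technical core is the asymptotic
\begin{equation}\tag{$\ast$}
c_nF_n = A_n + \varepsilon_n, \qquad \limsup_{n\to\infty}|\varepsilon_n|^{1/n} < 1/\rho_0(F).
\end{equation}
To derive $(\ast)$, I would represent $F_n=\int F\,\overline{p_n}\,d\mu$ through the Cauchy integral of $F$ across a level curve $\Gamma_r$, $1<r<\rho_0(F)$, together with the second-kind function $q_n^*(\eta):=\int\overline{p_n(z)}/(\eta-z)\,d\mu(z)$, obtaining $F_n=\frac{1}{2\pi i}\oint_{\Gamma_r}F(\eta)\,q_n^*(\eta)\,d\eta$. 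Szeg\H{o}'s strong asymptotic $p_n(z)\sim c_n S(z)\Phi(z)^n$ uniformly on $\Gamma_r$ then produces a matching asymptotic for $q_n^*$ which agrees, modulo an exponentially smaller error, with the contour representation
\[
A_n=\frac{1}{2\pi i}\oint_{\Gamma_r}\frac{F(z)\,\Phi'(z)}{S(z)\,\Phi(z)^{n+1}}\,dz
\]
obtained from $A$'s Laurent formula via the substitution $w=\Phi(z)$.

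Assuming $(\ast)$, the equivalence $(a)\Leftrightarrow(c)$ reduces to a Laurent convolution argument. The expansions $h(w)=\sum_{k\geq 0}h_{-k}w^{-k}$ and $1/h(w)=\sum_{k\geq 0}\eta_{k}w^{-k}$ yield
\[
f_n=\sum_{k\geq 0}h_{-k}\,A_{n+k}, \qquad A_n=\sum_{k\geq 0}\eta_k\,f_{n+k}.
\]
Under $(c)$ with $|\tau|=\rho_0(F)>1$, dominated convergence (using a uniform geometric bound $|f_{n+k}/f_n|\leq (1/|\tau|+\delta)^k$ available once $f_n/f_{n+1}\to\tau$) gives $A_n/f_n\to 1/h(\tau)\neq 0$, hence $A_n/A_{n+1}\to\tau$; combining with $(\ast)$ and $c_n/c_{n+1}\to 1$ from $\mu\in\mathcal{S}(E)$ yields $(a)$. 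The converse $(a)\Rightarrow(c)$ is symmetric, using the first identity and the fact that $h(\tau)\neq 0$.

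For $(b)\Leftrightarrow(c)$, apply $(\ast)$ to $zF\in\mathcal{H}(E)$. Since $(zF)(\Psi(w))=\Psi(w)f(w)$, we obtain $c_n\langle zF,p_n\rangle_\mu\sim B_n$, where $B:=\Psi\cdot A$ has Laurent coefficients $B_n=\sum_{k\leq 1}\psi_k A_{n-k}$ (from $\Psi(w)=\psi_1w+\psi_0+\psi_{-1}w^{-1}+\cdots$). Hence $\langle zF,p_n\rangle_\mu/F_n\sim B_n/A_n$. Under $(c)$ and the derived $A_n/A_{n+1}\to\tau$, dominated convergence gives $B_n/A_n\to\sum_{k\leq 1}\psi_k\tau^k=\Psi(\tau)=\lambda$, i.e., $(b)$. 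Conversely, given $(b)$, factor $\Psi(w)-\lambda=(w-\tau)\chi(w)$ with $\chi$ holomorphic and non-vanishing on $\{|w|\geq 1\}$; the relation $B_n-\lambda A_n=o(A_n)$ translates into $D_{n-1}-\tau D_n=o(A_n)$ for $D:=\chi A$, and I would argue that this forces the positive part of $A$'s Laurent expansion to have its unique singularity on $\{|w|=\rho_0(F)\}$ at $w=\tau$, of pole type. The classical Fabry-type converse then gives $A_n/A_{n+1}\to\tau$, whence $(c)$ by the convolution argument above.

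The principal obstacle is the proof of $(\ast)$: it requires uniform strong Szeg\H{o}-type asymptotics for the second-kind functions $q_n^*$ on every level curve $\Gamma_r$, $r>1$, which is the delicate step for general complex measures on $E$ (in contrast to the real-line and unit-circle settings of \cite{Sutinpade,Sutinpade1}). The converse direction $(b)\Rightarrow(c)$ is Tauberian in spirit and is the second subtle point; I handle it by exploiting that $(\Psi-\lambda)A$ has exponentially smaller Laurent coefficients than $A$, thereby pinning down the unique singularity of $A$ at $\tau$ and triggering the Fabry-type ratio asymptotics.
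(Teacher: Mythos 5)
Your plan hinges entirely on the estimate $(\ast)$, and that estimate is not obtainable — this is the genuine gap. Write the exact identity that your contour representation produces: $c_nF_n=[fh_n]_n$, where $h_n(w)=c_nw^{n+1}s_n(\Psi(w))\Psi'(w)$, so that
\[
c_nF_n-A_n=\frac{1}{2\pi i}\oint_{|w|=r}f(w)\bigl(h_n(w)-h(w)\bigr)\,\frac{dw}{w^{n+1}},\qquad h:=\lim_n h_n=1/(S\circ\Psi),
\]
with $1<r<\rho_0(F)$ forced because $f$ is holomorphic only in $1<|w|<\rho_0(F)$. The hypothesis $\mu\in\mathcal{S}(E)$ gives $\|h_n-h\|_{|w|=r}\to 0$ with \emph{no rate}, so the best available bound is $\varepsilon_n=o(r^{-n})$ for each fixed $r<\rho_0(F)$; this yields only $\limsup|\varepsilon_n|^{1/n}\le 1/\rho_0(F)$, never the strict inequality you assert. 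Since $|A_n|^{1/n}\to 1/\rho_0(F)$, a bound $o(r^{-n})$ with $r<\rho_0(F)$ does not make $\varepsilon_n$ negligible against $A_n$ (e.g.\ if $\|h_n-h\|\sim 1/\log n$ the bound on $\varepsilon_n$ exceeds $|A_n|$ by the exponential factor $(\rho_0(F)/r)^n$). Every subsequent step — $c_nF_n\sim A_n$, the ``symmetric'' converse $(a)\Rightarrow(c)$, and the reduction of $(b)$ to statements about $B_n/A_n$ — collapses without $(\ast)$. This is precisely the difficulty the paper avoids: it never replaces $h_n$ by its limit inside the bracket, but instead keeps the exact relations $[f\alpha_{n,j}]_n=0$ with $n$-dependent weights and invokes Buslaev's Lemmas~\ref{Buslaev2} and~\ref{Buslaev3}, which are engineered to tolerate uniformly convergent (but arbitrarily slowly convergent) perturbations of the weight functions.

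Two further points. First, even granting $(\ast)$, your converse directions are not ``symmetric'': from $(a)$ you know ratios of the sums $\sum_k(h_n)_{-k}f_{n+k}$ converge, but you have no a priori geometric bound on $f_{n+k}/f_n$ with which to run dominated convergence — that bound is the conclusion, not a hypothesis. This is a Tauberian step, and it is exactly what Buslaev's generalized Fabry theorem (Lemma~\ref{Buslaev3}) supplies. Second, in $(b)\Rightarrow(c)$ you appeal to a ``classical Fabry-type converse'': no such converse exists. Knowing that $\tau$ is the unique singularity of $\hat{A}$ on its circle of convergence does not yield $A_n/A_{n+1}\to\tau$ unless you also establish meromorphic continuation across that circle with a single dominant pole, which your sketch does not do. The forward direction $(c)\Rightarrow(a),(b)$ of your convolution argument is essentially a correct hand-proof of the $m=1$ case of Buslaev's Lemma~\ref{Buslaev2} (and can even be repaired to use the $n$-dependent coefficients $(h_n)_{-k}$, since $|\tau|>1$ leaves room for a dominated-convergence bound), but the inverse implications require the machinery you have bypassed.
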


Theorem \ref{thefirstrow} admits the following extension to general row sequences.


\begin{thm}\label{suetinana}
Let $F \in \mathcal{H}(E)$ and $\mu \in \mathcal{S}(E)$. If for all $n$ sufficiently large, $[n/m]_F^{\mu}$ has precisely $m$ finite poles $\lambda_{n,1}, \ldots, \lambda_{n,m},$ and
\begin{equation}
\lim_{n \rightarrow \infty} \lambda_{n,j}= \lambda_j, \quad j=1,2,\ldots,m,\notag
\end{equation}
($\lambda_1,\ldots, \lambda_m$ are not necessarily distinct), then
\begin{enumerate}
\item[$(i)$] $F$ is holomorphic in $D_{\rho_{\min}}$ where $\rho_{\min}:=\min_{1\leq j \leq m}|\Phi(\lambda_j) |)$\textup{;}
\item[$(ii)$] $\rho_{m-1}(F)=\max_{1\leq j \leq m} |\Phi(\lambda_j)|$\textup{;}
\item[$(iii)$] $\lambda_1, \ldots, \lambda_m$ are singularities of $F;$ those lying in $D_{\rho_{m-1}(F)}$ are poles (counting multiplicities), and $F$ has no other poles in $D_{\rho_{m-1}(F)}$.
\end{enumerate}
\end{thm}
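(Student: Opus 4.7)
The plan is to reduce the theorem to the classical Gonchar--Suetin inverse theorem for Pad\'e approximants of Taylor series via the transplantation $f(w):=F(\Psi(w))$, in the same spirit that Theorem~\ref{equivfirstpole} was used to reduce Theorems~\ref{directfabryana} and \ref{thefirstrow} to Fabry's theorem. The base case $m=1$ follows immediately from Theorem~\ref{thefirstrow}: the unique zero of $Q_{n,1}^\mu$ equals $\langle zF,p_{n+1}\rangle_\mu/F_{n+1}$, so $\lambda_{n,1}\to\lambda_1$ is exactly the hypothesis of Theorem~\ref{thefirstrow}, and its conclusion delivers (i)--(iii) when $m=1$.

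For general $m\geq 2$, I would first use the Szeg\H{o} asymptotic $p_n(z)\sim c_nS(z)\Phi^n(z)$ on compact subsets of $\overline{\mathbb{C}}\setminus E$, together with the correspondence between Fourier coefficients and Laurent coefficients already exploited in proving Theorem~\ref{equivfirstpole}, to derive asymptotic identities for each $\langle z^jF,p_n\rangle_\mu$ in terms of the Laurent coefficients of $\Psi^j(w)f(w)$ on $|w|>1$. Writing $Q_{n,m}^\mu(z)=z^m+b_{n,m-1}z^{m-1}+\cdots+b_{n,0}$, the coefficients $b_{n,j}$ are determined by the $m\times m$ linear system arising from \eqref{pade3} at the indices $k=n+1,\ldots,n+m$, and Cramer's rule expresses each $b_{n,j}$ as a ratio of determinants in the entries $\langle z^iF,p_k\rangle_\mu$. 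The Szeg\H{o} identities then allow these ratios to be compared to the analogous ratios of Hankel determinants governing the classical Pad\'e denominators of $f$, and one concludes that the zeros of the classical $(n,m)$ Pad\'e denominators for $f$ converge to $\mu_j:=\Phi(\lambda_j)$.

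With this transplantation in hand, the classical Suetin/Gonchar inverse theorem applied to $f$ yields that $f$ is holomorphic in $\{|w|<\min_j|\mu_j|\}$, is meromorphic with at most $m-1$ poles in $\{|w|<\max_j|\mu_j|\}$, has each $\mu_j$ among its singularities, and has precisely those $\mu_j$ with $|\mu_j|<\max_i|\mu_i|$ as poles (with the correct multiplicities). Since $\Psi$ is a conformal bijection of $\{|w|>\rho\}$ onto $\overline{\mathbb{C}}\setminus\overline{D_\rho}$ extending continuously to the boundary, these statements pull back through $F=f\circ\Phi$ to yield conclusions (i), (ii), and (iii).

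The main obstacle is the transplantation step. The Szeg\H{o} asymptotic carries only a multiplicative $1+o(1)$ error, while the $b_{n,j}$ are quotients of $m\times m$ determinants whose entries lie at indices $n+1,\ldots,n+m$, so one must ensure that the errors in the asymptotic of $p_n$ do not accumulate when the determinants are formed, so that the convergence $\lambda_{n,j}\to\lambda_j$ genuinely transfers to the classical Pad\'e denominators of $f$. I expect this step to require either a careful direct estimate on normalized Hankel-type determinants or an auxiliary lemma (analogous to those of Buslaev or Gonchar cited in the preamble) quantifying the discrepancy between the two determinant systems, and also to require ruling out degeneracies such as vanishing of the limiting Hankel determinant.
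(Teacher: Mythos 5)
Your overall architecture coincides with the paper's: transplant to $f=F\circ\Psi$, show that the poles of the classical Pad\'e approximants of the regular part $\hat f$ converge to $\tau_j=\Phi(\lambda_j)$, invoke the classical Suetin inverse theorem for $\hat f$, and pull the conclusions back through $\Psi$. The difficulty is that the transplantation step is not a technicality you can defer --- it is the entire content of the paper's Theorem~\ref{thm6}, and the route you sketch for it does not work. Comparing Cramer's-rule determinants for $Q_{n,m}^{\mu}$ with the Hankel determinants of $f$ via the entrywise Szeg\H{o} asymptotics $\langle z^jF,p_k\rangle_{\mu}\approx(1+o(1))[\Psi^j f h]_k/c_k$ fails because these determinants involve massive cancellation: $\Delta_{n,m}(F,\mu)$ and $\det(f_{n-i-j})$ are typically exponentially small compared with their entries, so a multiplicative $1+o(1)$ perturbation of each entry gives no control whatsoever on the ratio of the two determinants. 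Worse, in the direction you actually need (convergence of the $\lambda_{n,j}$ implies convergence of the classical Pad\'e poles of $\hat f$), you cannot assume the limiting Hankel determinants are nonzero or that their ratios converge --- that is essentially what has to be proved, and you flag it yourself as a degeneracy to be ``ruled out'' without a mechanism for doing so.

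The paper's proof supplies exactly the missing ingredient: Buslaev's Lemmas~\ref{Buslaev2} and~\ref{Buslaev3} (extensions of Poincar\'e's theorem on difference equations). For the direction hypothesized in Theorem~\ref{suetinana} one sets $\alpha_n(w)=w^{-m}h(w)Q_{n,m}^{\mu}(\Psi(w))$ and $\eta_{n,j}(w)=h_{n+m-j}(w)w^{j}/h(w)$, checks from orthogonality the \emph{exact} relations $[f\alpha_n\eta_{n,j}]_n=0$ (no asymptotic error at all), and applies Lemma~\ref{Buslaev3}: since $h$ is non-vanishing, the limit $\alpha(w)=w^{-m}h(w)\prod_j(\Psi(w)-\lambda_j)$ has exactly the zeros $\tau_j=\Phi(\lambda_j)$, and the lemma concludes directly that the classical Pad\'e poles of $\hat f$ converge to these $\tau_j$ (the rational-function alternative being excluded because $\deg Q_{n,m}^{\mu}=m$). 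The determinant-ratio formula \eqref{buslaevlemma2.3} of Lemma~\ref{Buslaev2}, which is the closest thing in the paper to your Hankel comparison, is only valid \emph{after} one already knows the classical poles converge, and is used for the opposite implication. Without Buslaev's lemmas or an equivalent substitute, your proposal has a genuine gap at its central step. (Your $m=1$ reduction to Theorem~\ref{thefirstrow} is fine, but it does not generalize, since for $m\ge 2$ there is no closed-form expression for the poles analogous to $\langle zF,p_{n+1}\rangle_{\mu}/F_{n+1}$.)
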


For classical Pad\'e approximants, this theorem was proved by S.P. Suetin in \cite{sue2} (see also \cite{sue1}). In \cite[Theorem 1]{Vib2009}, V.I. Buslaev provides an analogue for measures supported on a bounded interval of the real line. Buslaev reduces the proof of his result to Suetin's statement through an extension of Poincar\'{e}'s theorem on difference equations (see Lemmas \ref{Buslaev2}-\ref{Buslaev3} below). We will follow this approach by proving


\begin{thm}\label{thm6}
Let $F \in \mathcal{H}(E)$ and $\mu \in \mathcal{S}(E)$. Define $f(w):=F(\Psi(w))$ and
denote the Laurent series of $f$ about $0$ by $\sum_{k=-\infty}^{\infty}f_{k} w^{k}$ and the regular part of $f$ by $\hat{f}(w):=\sum_{k=0}^{\infty} f_{k}w^{k}.$  For each fixed $m\geq 1,$ the following conditions are equivalent:
\begin{enumerate}
\item[$(a)$] The poles of $[n/m]_{\hat{f}}$ have finite limits $\tau_1,\ldots,\tau_m$, as $n \rightarrow \infty$.
\item[$(b)$] The poles of $[n/m]_F^{\mu}$ have finite limits $\lambda_1,\ldots,\lambda_m$, as $n \rightarrow \infty$.
\end{enumerate}
Under appropriate enumeration of the sub-indices, the values $\lambda_j$ and $\tau_j$, $j=1,\dots,m,$ are related by the formula $\Phi(\lambda_j)=\tau_j$ for all $j=1,\ldots,m.$
\end{thm}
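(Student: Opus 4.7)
We follow Buslaev's strategy of reducing a question about orthogonal Pad\'e approximants to the analogous question about classical Pad\'e approximants applied to the regular part $\hat f$. The key bridge will be an asymptotic identification of the generalized Fourier coefficients $\langle z^l F, p_n\rangle_\mu$ in terms of the Taylor coefficients $\{f_k\}$ of $\hat f$. Once such an identification is in hand, the linear system defining $Q_{n,m}^\mu$ becomes asymptotically equivalent, under the substitution $z=\Psi(w)$, to the classical Pad\'e linear system defining $Q_{n,m}^{\hat f}$, and the correspondence $\Phi(\lambda_j)=\tau_j$ drops out by the Buslaev extensions of Poincar\'e's theorem on difference equations (stated as Lemmas 2 and 3 below).

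The first and most substantial step is to establish, for each fixed $l\geq 0$ and all sufficiently large $n$, the representation
\[
\langle z^l F, p_n\rangle_\mu \;=\;\frac{1}{c_n}\,\frac{1}{2\pi i}\oint_{|w|=\rho}\Psi^l(w)\,\hat f(w)\,\frac{\Psi'(w)}{S(\Psi(w))}\,\frac{dw}{w^n}\bigl(1+o(1)\bigr),
\]
where $1<\rho<\rho_0(F)$. This comes from writing $\langle z^l F, p_n\rangle_\mu$ as a contour integral along $\Gamma_\rho$ against the function of the second kind associated with $p_n$, invoking the strong asymptotic $p_n(z)\sim c_n S(z)\Phi(z)^n$ and the dual asymptotic for the second-kind function, and then making the change of variables $z=\Psi(w)$. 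Setting $h(w):=\Psi'(w)/S(\Psi(w))$, which is holomorphic and nonvanishing in $|w|>1$, and expanding the Laurent series $\Psi^l(w)h(w)=\sum_{k\leq l}A_{l,k}w^k$ at infinity, one obtains
\[
\langle z^l F, p_n\rangle_\mu \;=\;\frac{1}{c_n}\sum_{k\leq l}A_{l,k}\,f_{n-1-k}\,+\,(\text{error of smaller order}),
\]
which generalizes Theorem \ref{equivfirstpole} to higher powers of $z$.

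Inserting these expressions into the defining system $\sum_{l=0}^{m}q_{n,l}^\mu\langle z^l F, p_{n+i}\rangle_\mu=0$, $i=1,\ldots,m$, for $Q_{n,m}^\mu$ and factoring out the common $1/c_{n+i}$, the coefficient vector $(q_{n,l}^\mu)_{l=0}^m$ is transformed by an upper triangular matrix with nonzero diagonal, built from the leading Laurent coefficients of $\Psi^l h$, into the coefficient vector of the classical Pad\'e denominator $Q_{n,m}^{\hat f}$, modulo a perturbation vanishing with $n$. Since the triangular matrix is invertible, the two linear systems are asymptotically related in both directions, and the Buslaev lemmas transfer convergence of the zeros from either polynomial to the other, delivering $\Phi(\lambda_j)=\tau_j$ under the appropriate enumeration.

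The hardest part will be carrying out the first step with error estimates that are uniform in $l\in\{0,\ldots,m\}$ on a level curve where both the Szeg\H{o} asymptotics and the analyticity of $\hat f$ are sharp. A second delicate point is verifying the hypotheses of the Buslaev lemmas when the leading coefficients of $Q_{n,m}^\mu$ may degenerate, when pole limits may coalesce, and when the infinite Laurent tail of $\Psi^l h$ contributes an infinite-order perturbation to the underlying difference equation. These issues will be handled by first showing a priori that the roots of $Q_{n,m}^\mu$ remain in a compact region of $\mathbb{C}$, and then appealing to continuity of roots under coefficient convergence for polynomials of fixed degree $m$.
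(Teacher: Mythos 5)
Your overall strategy (pass to $w=\Phi(z)$, use the strong asymptotics of $p_n$ and of the second--type functions, and invoke Buslaev's lemmas) is the right one, but the central technical device you propose does not work as stated, and the paper proceeds differently at exactly that point. You want an \emph{asymptotic} identification $\langle z^l F,p_n\rangle_\mu=\frac{1}{c_n}[\Psi^l \hat f\, h]_{n}(1+o(1))$ obtained by substituting the Szeg\H{o} asymptotics into the contour integral. A multiplicative $(1+o(1))$ error cannot be extracted this way: the coefficients $\langle z^l F,p_n\rangle_\mu$ may be exponentially small compared to the integrand on $\Gamma_\rho$ (cancellation), so replacing $s_n$ by its asymptotic form yields only an additive error that can dominate the coefficient itself. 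The paper avoids this entirely by proving the \emph{exact} identity $c_{n+\nu}\langle z^{j-1}F,p_{n+\nu}\rangle_\mu=[f\,w^{-\nu}h_{n+\nu}\,\Psi^{j-1}]_n$ with $h_n(w)=c_nw^{n+1}s_n(\Psi(w))\Psi'(w)$, and lets the asymptotics enter only through the uniform convergence $h_n\to h=1/S\circ\Psi$ --- which is precisely the form of hypothesis Buslaev's Lemmas 2 and 3 are built to accept. Relatedly, your ``upper triangular matrix with nonzero diagonal'' connecting the two linear systems does not exist as a finite object: $\Psi^l h$ has an infinite Laurent tail, so the passage between the orthogonal and classical systems is an infinite--order perturbation of the difference equation. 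You acknowledge this in your last paragraph but then defer it; it is not a technicality to be absorbed by ``continuity of roots,'' it is the main difficulty, and it is resolved only by feeding the exact functionals $[f\alpha_n\eta_{n,j}]_n=0$ directly into the lemmas rather than by comparing coefficient vectors.

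Two further gaps. In the direction $(a)\Rightarrow(b)$ you must prove that $[n/m]_F^\mu$ actually has exactly $m$ finite poles for large $n$ (equivalently $\Delta_{n,m}(F,\mu)\neq0$ and $\deg Q_{n,m}^\mu=m$); your appeal to continuity of roots presupposes this. The paper establishes it by writing the unnormalized denominator $\tilde Q_{n,m}$ as a determinant of brackets $[fK_{n,t}L_{n,r}]_n$, applying the determinant limit formula \eqref{buslaevlemma2.3} of Lemma 2, and checking that the limit polynomial has a nonzero leading coefficient $(-1)^{m(m-1)/2}\prod h(\tau_i)\tau_i^{-m}\prod\bigl(\tfrac{\Psi(\tau_j)-\Psi(\tau_i)}{\tau_j-\tau_i}\bigr)$; nothing in your outline produces this. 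In the direction $(b)\Rightarrow(a)$ you must also rule out alternative $(i)$ of Lemma 3 (that $\hat f$ is rational with at most $m-1$ poles), which the paper does by observing that this would force $\Delta_{n,m}(F,\mu)=0$, contradicting $\deg Q_{n,m}^\mu=m$. As written, your argument has a plausible skeleton but leaves unproved the two statements that constitute the substance of the theorem.
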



\section{Proof of Theorem \ref{montessusana}}

The second type functions $s_n(z)$  defined by
\begin{equation*}
s_n(z):=\int \frac{\overline{p_n(\zeta)}}{z-\zeta} d\mu(\zeta), \quad z\in \overline{\mathbb{C}}\setminus \mbox{supp}(\mu),
\end{equation*}
play a major role in the proofs that follow.


\begin{lemma}\label{secondtype}
If  $\mu \in \mathcal{R}(E),$ then $$\lim_{n \rightarrow \infty} p_n(z)s_n(z)=\frac{\Phi'(z)}{\Phi(z)},$$
uniformly inside $\overline{\mathbb{C}}\setminus E.$ Consequently, for any compact set $K\subset \mathbb{C}\setminus E,$ there exists $n_0$  such that  $s_n(z)\not=0$ for all $z\in K$ and $n \geq n_0.$
\end{lemma}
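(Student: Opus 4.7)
The plan is a normal-family argument: show that the sequence $R_n(z):=p_n(z)s_n(z)$, which is holomorphic in $\overline{\mathbb{C}}\setminus\mbox{supp}(\mu)\supseteq\overline{\mathbb{C}}\setminus E$, is uniformly bounded on compact subsets of $\overline{\mathbb{C}}\setminus E$, and then identify every subsequential limit as $\Phi'(z)/\Phi(z)$. First I would check that $R_n$ and $\Phi'/\Phi$ have the same behavior at $\infty$: expanding $1/(z-\zeta)$ in powers of $1/z$ and using orthogonality of $p_n$ to all polynomials of degree $<n$, together with $\int\zeta^n\overline{p_n(\zeta)}\,d\mu=1/\kappa_n$, gives $s_n(z)=1/(\kappa_n z^{n+1})+O(z^{-n-2})$ at infinity, whence $R_n(z)=1/z+O(z^{-2})$; since $\Phi(z)=\gamma z+O(1)$ at $\infty$ for some $\gamma>0$, the logarithmic derivative $\Phi'/\Phi=(\log\Phi)'$ also has Laurent expansion $1/z+O(z^{-2})$ there.

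For normality, I would observe that for fixed $z\in\overline{\mathbb{C}}\setminus E$ the Cauchy kernel $g_z(\zeta):=1/(z-\zeta)$ lies in $\mathcal{H}(E)$ with $\rho_0(g_z)=|\Phi(z)|$, since $z$ sits on the level curve $\Gamma_{|\Phi(z)|}$ and $g_z$ extends holomorphically to $D_{|\Phi(z)|}$ but no further. The numbers $s_k(z)$ are precisely the Fourier--orthogonal coefficients $\langle g_z,p_k\rangle_\mu$, so the Cauchy--Hadamard-type formula $\rho_0(F)=(\limsup|F_n|^{1/n})^{-1}$ recalled in the Introduction yields $\limsup_n|s_n(z)|^{1/n}\le 1/|\Phi(z)|$; combined with $|p_n(z)|^{1/n}\to|\Phi(z)|$ (regularity, implied by ratio asymptotics), this already gives $\limsup_n|R_n(z)|^{1/n}\le 1$ pointwise. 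The principal obstacle is to upgrade this to a uniform sup-norm bound on compact $K\subset\overline{\mathbb{C}}\setminus E$, which I would handle by sharpening the estimate on $|s_n(z)|$ via Parseval applied to $g_z-P_{n-1}g_z$ (where $P_{n-1}$ is orthogonal projection onto the span of $p_0,\ldots,p_{n-1}$) together with the uniform ratio asymptotics $p_n/p_{n+1}\to 1/\Phi$ on compacts, ultimately producing $|s_n(z)|\le C_K/|p_n(z)|$ for $z\in K$ and $n\ge n_0$.

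Having established normality, let $R$ be any locally uniform subsequential limit of $\{R_n\}$. Then $R$ is holomorphic in $\overline{\mathbb{C}}\setminus E$, vanishes at $\infty$, and has Laurent expansion $1/z+O(z^{-2})$ there. To identify $R\equiv\Phi'/\Phi$, I would either match the higher Laurent coefficients at $\infty$ by combining orthogonality with the ratio asymptotics for $\kappa_n$ and the subleading coefficient of $p_n$, or derive and integrate a first-order functional equation for $R$ from the Hessenberg identity $\zeta p_n(\zeta)=\sum_{k=0}^{n+1}\langle\zeta p_n,p_k\rangle_\mu\,p_k(\zeta)$ in $L^2(\mu)$. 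Uniqueness of the limit promotes subsequential convergence to convergence of the entire sequence $R_n\to\Phi'/\Phi$ uniformly inside $\overline{\mathbb{C}}\setminus E$. The consequence on zeros of $s_n$ then follows at once: $\Phi'/\Phi$ is nowhere vanishing on $\mathbb{C}\setminus E$ (as $\Phi$ is univalent and non-zero with $\Phi'\ne 0$ there), so by Hurwitz's theorem $R_n$ is non-vanishing on any given compact $K\subset\mathbb{C}\setminus E$ for large $n$; combined with $p_n\ne 0$ on $K$ for large $n$ (Hurwitz applied to $p_n/p_{n+1}\to 1/\Phi\ne 0$), we conclude $s_n\ne 0$ on $K$ for $n\ge n_0$.
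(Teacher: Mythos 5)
Your overall architecture (normal family, identification of the limit via the behavior at $\infty$, then Hurwitz for the non-vanishing of $s_n$) is the same as the paper's, and your treatment of the final consequence is fine. But the two load-bearing steps both have genuine gaps.

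First, normality. The bound you aim for, $|s_n(z)|\le C_K/|p_n(z)|$ on a compact $K$, is essentially equivalent to the uniform boundedness of $p_ns_n$ that you are trying to establish, and the tools you propose do not deliver it. Parseval applied to $g_z-P_{n-1}g_z$ only gives $|s_n(z)|\le \|g_z\|_{L_2(\mu)}\le c/\operatorname{dist}(z,E)$, and even the sharper Bernstein--Walsh-type estimate $|s_n(z)|\le C_K\rho^{-n}$ (valid for $\rho<\min_K|\Phi|$) pairs with $|p_n(z)|\le C(\max_K|\Phi|+\varepsilon)^n$ to give a product that is \emph{unbounded} on any $K$ not squeezed onto a single level curve. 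The missing idea is the orthogonality identity
\begin{equation*}
p_n(z)s_n(z)=\int\frac{p_n(z)-p_n(\zeta)}{z-\zeta}\,\overline{p_n(\zeta)}\,d\mu(\zeta)+\int\frac{|p_n(\zeta)|^2}{z-\zeta}\,d\mu(\zeta)=\int\frac{|p_n(\zeta)|^2}{z-\zeta}\,d\mu(\zeta),
\end{equation*}
since the first integrand is a polynomial in $\zeta$ of degree $n-1$ times $\overline{p_n(\zeta)}$. This exhibits $p_ns_n$ as the Cauchy transform of the probability measure $|p_n|^2d\mu$ supported in $E$, so $|p_n(z)s_n(z)|\le 1/\operatorname{dist}(z,E)$ and normality is immediate.

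Second, identification of the limit. With the identity above, the Laurent coefficients of $p_ns_n$ at $\infty$ are the moments $\int\zeta^k|p_n(\zeta)|^2d\mu(\zeta)$, so ``matching higher Laurent coefficients'' amounts to proving that $|p_n|^2d\mu$ converges weak-$*$ to the measure whose Cauchy transform is $\Phi'/\Phi$ (the pushforward of normalized arclength on $\mathbb{T}$ under $\Psi$, i.e.\ the equilibrium measure of $E$). That ratio asymptotics \eqref{ratioasym} implies this is precisely Simanek's theorem \cite[Theorem 1.8]{Brian}, which the paper invokes; it does not follow from a short computation with $\kappa_n/\kappa_{n+1}$ and the subleading coefficient of $p_n$, and your alternative route via the Hessenberg identity is only a gesture toward the same nontrivial fact. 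As written, the proposal assumes the hard step rather than proving or citing it.
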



\begin{proof}[Proof] From orthogonality, we get
$$p_n(z)s_n(z)=\int \frac{|p_n(\zeta)|^2}{z-\zeta} d\mu(\zeta),\quad z\notin \mbox{supp}(\mu).$$ Since $p_n$ is of norm $1$ in $L_2(\mu)$, the sequence $\left(\int {|p_n(\zeta)|^2}/({z-\zeta}) d\mu(\zeta)\right)_{n \geq 0}$ forms a normal family in $\overline{\mathbb{C}}\setminus E.$ Consequently, the limit stated follows from pointwise convergence in a neighborhood of infinity. Now, for all $z$ sufficiently large, since $\mu \in \mathcal{R}(E)$ from  \cite[Theorem 1.8]{Brian} it follows that\footnote{We note that in \cite[Theorem 1.8]{Brian} it is assumed that $E$ is a compact set bounded by a Jordan curve. However, as pointed out to us by the author, the result remains valid if $E$ verifies the conditions imposed in this paper.}
 $$\lim_{n \rightarrow \infty}\int \frac{|p_n(\zeta)|^2}{z-\zeta} d\mu(\zeta)=\lim_{n \rightarrow \infty}\sum_{k=0}^\infty \frac{1}{z^{k+1}} \int \zeta^{k} |p_n(\zeta)|^2 d\mu(\zeta)=\sum_{k=0}^\infty \frac{1}{z^{k+1}} \frac{1}{2\pi }\int_{\mathbb{T}} \Psi(w)^{k} \frac{dw}{w i}$$
 $$=\frac{1}{2\pi i}\int_{\mathbb{T}} \frac{1}{w(z-\Psi(w))}dw=\frac{1}{2\pi i}\int_{\Psi(\mathbb{T})} \frac{\Phi'(\zeta)}{\Phi(\zeta)(z-\zeta)}d\zeta=\frac{\Phi'(z)}{\Phi(z)}.$$
Since the function on the right-hand side never vanishes in $\mathbb{C} \setminus E,$ the rest of the statements follow at once.
\end{proof}

\begin{proof}[Proof of Theorem \ref{montessusana}]
For $l=0,1,\ldots,$ from \eqref{ratioasym} it follows that
\begin{eqnarray}\label{banana7}
\lim_{n \rightarrow \infty} \frac{p_{n}(z)}{p_{n+l}(z)}=\frac{1}{\Phi(z)^l}, \quad l=0,1,\ldots,
\end{eqnarray}
and by (\ref{banana7}) and Lemma \ref{secondtype}
\begin{align}\label{banana8}
\lim_{n \rightarrow \infty} \frac{s_{n+l}(z)}{s_{n}(z)}&=\lim_{n \rightarrow \infty} \frac{p_n(z)}{p_{n+l}(z)}\frac{p_{n+l}(z)s_{n+l}(z)}{p_n(z)s_n(z)}=\frac{1}{\Phi(z)^{l}} \frac{\Phi'(z)/\Phi(z)}{\Phi'(z)/\Phi(z)}=\frac{1}{\Phi(z)^{l}} ,
\end{align}
uniformly inside $\mathbb{C} \setminus E$. From (\ref{banana7}) and (\ref{banana8}) we obtain that
\begin{eqnarray}\label{banana9}
\lim_{n \rightarrow \infty} |p_n(z)|^{1/n}=|\Phi(z)|, \qquad \mbox{and} \qquad
\lim_{n \rightarrow \infty} |s_n(z)|^{1/n}=\frac{1}{|\Phi(z)|},
\end{eqnarray}
uniformly inside $\mathbb{C} \setminus E.$

By the definition of Pad\'{e}-orthogonal approximant and the first relation in (\ref{banana9}), we have
\begin{equation}\label{banna1}
Q^{\mu}_{n,m}(z)F(z)-P^{\mu}_{n,m}(z)=\sum_{k=n+m+1}^{\infty} a_{k,n} p_k(z), \quad z\in  D_{\rho_0(F)}.
\end{equation}
Using Cauchy's integral formula and Fubini's theorem, we obtain, for $k \geq n+1$
\begin{align}\label{banana202}
&a_{k,n}:=\langle Q_{n,m}^{\mu}F,p_k \rangle_{\mu}=\int \frac{1}{2 \pi i} \int_{\Gamma_{\rho_1}} \frac{Q_{n,m}^{\mu}(t) F(t)}{t-z} dt\, \overline{p_{k}(z)} \,d\mu(z) \notag\\
&=\frac{1}{2 \pi i} \int_{\Gamma_{\rho_1}} Q_{n,m}^{\mu}(t) F(t) \int \frac{\overline{p_k(z)}}{t-z} d\mu(z) dt =  \frac{1}{2 \pi i} \int_{\Gamma_{\rho_1}} Q_{n,m}^{\mu}(t) F(t) s_k(t) dt,
\end{align}
where $1<\rho_1<\rho_0(F).$ Let $\{\alpha_1,\ldots,\alpha_\gamma\}$ be the set of distinct poles of $F$ in $D_{\rho_{m}(F)}$ and $m_k$ the multiplicity  of $\alpha_k$ so that
$$Q(z):=\prod_{j=1}^{m}(z-\lambda_j)=\prod_{k=1}^{\gamma}(z-\alpha_k)^{m_k}, \quad m := \sum_{k=1}^{\gamma}m_k.$$
Multiplying (\ref{banna1}) by $Q$ and expanding $QQ^{\mu}_{n,m}F-QP^{\mu}_{n,m}\in H(D_{\rho_m(F)})$ in terms of the orthonormal system $\{p_{\nu}\}_{\nu=0}^{\infty}$, we obtain that for $z\in D_{\rho_m(F)},$
\begin{equation}\label{banna2}
Q(z)Q^{\mu}_{n,m}(z)F(z)-Q(z)P^{\mu}_{n,m}(z)=\sum_{k=n+m+1}^{\infty} a_{k,n} Q(z)p_k(z)=\sum_{\nu=0}^{\infty} b_{\nu,n} p_\nu(z),
\end{equation}
where
\begin{eqnarray*}
b_{\nu,n}:=\sum_{k=n+m+1}^{\infty} a_{k,n} \langle Qp_k , p_\nu \rangle_{\mu}, \quad \nu=0,1,\dots.
\end{eqnarray*}

First of all, we will estimate $|a_{k,n}|$ in terms of $|\tau_{k,n}|$, where
\begin{equation}\label{banana201}
\tau_{k,n}:=\frac{1}{2 \pi i} \int_{\Gamma_{\rho_2}} Q_{n,m}^{\mu}(t) F(t) s_k(t) dt, \quad \rho_{m-1}(F)<\rho_2<\rho_{m}(F), \quad k=0,1\ldots.
\end{equation}
Since $\rho_2 > \rho_1$ the integral in (\ref{banana201}) allows a better upper bound than the last integral in (\ref{banana202}). For each $k\geq 0$, the function $Q_{n,m}^{\mu}Fs_{k}$ is meromorphic on $\overline{D_{\rho_2}}\setminus D_{\rho_1} =\{z\in \mathbb{C}: \rho_1 \leq |\Phi(z)| \leq \rho_2 \}$ and has poles at $\alpha_1, \ldots, \alpha_\gamma$ with multiplicities at most $m_1,\ldots, m_\gamma,$ respectively. Applying Cauchy's residue theorem we obtain
\begin{align}\label{banana25}
\frac{1}{2 \pi i}\int_{\Gamma_{\rho_2}} Q_{n,m}^{\mu}(t) F(t) s_k(t)dt-\frac{1}{2 \pi i} \int_{\Gamma_{\rho_1}}Q_{n,m}^{\mu}(t) F(t) s_k(t) dt=\sum_{j=1}^{\gamma} \textup{res}(Q_{n,m}^{\mu} F s_k, \alpha_j),
\end{align}
for $k\geq 0.$
The limit formula for the residue gives
\begin{equation}\label{banana70}
\textup{res}(Q_{n,m}^{\mu}F s_k, \alpha_j)=\frac{1}{(m_j-1)!} \lim_{z \rightarrow \alpha_j} ((z-\alpha_j)^{m_{j}} Q_{n,m}^{\mu}(z) F(z) s_k(z))^{(m_j-1)}.
\end{equation}
Since $s_n(z)\not=0$  for all sufficiently large $n$ and $z\in \mathbb{C}\setminus E$ (see Lemma \ref{secondtype}),   Leibniz' formula allows us to write
$$
  ((z-\alpha_j)^{m_{j}} Q_{n,m}^{\mu}(z) F(z)s_k(z))^{(m_j-1)}=\left( (z-\alpha_j)^{m_{j}} Q_{n,m}^{\mu}(z) F(z) s_n(z)\frac{s_k(z)}{s_n(z)} \right)^{(m_j-1)}$$
$$ = \sum_{p=0}^{m_j-1}
{m_j-1
\choose
p}
((z-\alpha_j)^{m_{j}} Q_{n,m}^{\mu}(z)F(z)s_n(z))^{(m_j-1-p)} \left( \frac{s_{k}(z)}{s_n(z)}\right)^{(p)}.
$$
For $j=1,\ldots,\gamma$ and $ p=0,\ldots,m_j-1$, set
\begin{equation}
\beta_n(j,p):=\frac{1}{(m_j-1)!} \left(
\begin{array}{c}
m_j-1\\
p
\end{array}
\right)
\lim_{z \rightarrow \alpha_j} ((z-\alpha_j)^{m_{j}} Q_{n,m}^{\mu}(z) F(z) s_n(z))^{(m_j-1-p)}, \notag
\end{equation}
(notice that the $\beta_n(j,p)$ do not depend on $k$). So, we can rewrite (\ref{banana25}) as
\begin{equation}\label{banana74}
a_{k,n}=\tau_{k,n}-\sum_{j=1}^{\gamma} \left( \sum_{p=0}^{m_j-1} \beta_n(j,p) \left(\frac{s_k}{s_n} \right)^{(p)}(\alpha_j) \right), \quad   n \geq n_0 \quad  \textup{and}\quad  k=0,1,\ldots .
\end{equation}
Since $a_{k,n}=0,$ for $k=n+1,n+2,\ldots,n+m,$ it follows from (\ref{banana74}) that
\begin{equation}\label{banana75}
\sum_{j=1}^{\gamma} \sum_{p=0}^{m_j-1} \beta_n(j,p) \left(\frac{s_k}{s_n} \right)^{(p)}(\alpha_j)=\tau_{k,n}, \quad k=n+1,\ldots, n+m.
\end{equation}
We view \eqref{banana75} as a system of $m$ equations on the $m$ unknowns $\beta_n(j,p).$ If we show that
\begin{eqnarray}\label{matrixL}
\Lambda_n:=
\begin{vmatrix}
  \left(\frac{s_{n+1}}{s_n}\right)(\alpha_j) &  \left(\frac{s_{n+1}}{s_n}\right)'(\alpha_j) &  \cdots & \left(\frac{s_{n+1}}{s_n}\right)^{(m_j-1)}(\alpha_j)  \\
  \left(\frac{s_{n+2}}{s_n}\right)(\alpha_j) &  \left(\frac{s_{n+2}}{s_n}\right)'(\alpha_j) &  \cdots &\left(\frac{s_{n+2}}{s_n}\right)^{(m_j-1)}(\alpha_j)  \\
   \vdots   & \vdots    & \vdots &  \vdots  \\
  \left(\frac{s_{n+m}}{s_n}\right)(\alpha_j) & \left(\frac{s_{n+m}}{s_n}\right)'(\alpha_j) &  \cdots &  \left(\frac{s_{n+m}}{s_n}\right)^{(m_j-1)}(\alpha_j)  \\
 \end{vmatrix}_{j=1,\ldots,\gamma}\not=0
 \end{eqnarray}
 (this expression represents the determinant of order $m$ in
which the indicated group of columns is written out successively for $j = 1,\ldots,\gamma$),
 then we can express $\beta_n(j,p)$ in terms of $(s_k/s_n)^{(p)}(\alpha_j)$ and $\tau_{k,n},$ for $k=n+1,\ldots n+m.$ In fact,
\begin{eqnarray*}
&\lim_{n \rightarrow \infty} \Lambda_n=\Lambda:=
\begin{vmatrix}
  R(\alpha_j) &  R'(\alpha_j) &  \cdots & R^{(m_j-1)}(\alpha_j)  \\
 R^2(\alpha_j) & (R^2)'(\alpha_j) &  \cdots &(R^2)^{(m_j-1)}(\alpha_j)  \\
   \vdots   & \vdots    & \vdots &  \vdots  \\
 R^m(\alpha_j) & (R^m)'(\alpha_j) &  \cdots &  (R^m)^{(m_j-1)}(\alpha_j)  \\
 \end{vmatrix}_{j=1,\ldots,\gamma}\\
 &=\prod_{j=1}^{\gamma}(m_j-1)!! (-\Phi'(\alpha_j))^{m_j(m_j-1)/2} \Phi(\alpha_j)^{-m_j^2} \prod_{1\leq i< j\leq \gamma} \left(\frac{1}{\Phi(\alpha_j)}-\frac{1}{\Phi(\alpha_i)} \right)^{m_i m_j},
 \end{eqnarray*}
 where $R(z)=1/\Phi(z)$ and $n!!=0!1!\cdots n!$
 (use, for example, \cite[Theorem 1]{Sobczyk} for the last equality). Hence, $\Lambda\not=0$ and, for all sufficiently large $n$, $|\Lambda_n|\geq c_1>0$ where the number $c_1$ does not depend on $n$ (from now on, we will denote some constants that do not depend on $n$ by $c_2,c_3,\ldots$ and we will consider only $n$ large enough so that $|\Lambda_n| \geq c_1>0$).


 Applying Cramer's rule to (\ref{banana75}), we have
\begin{equation}\label{betai}
\beta_n(j,p)=\frac{\Lambda_n(j,p)}{\Lambda_n}=\frac{1}{\Lambda_n}\sum_{s=1}^{m} \tau_{n+s,n} C_n(s,q),
\end{equation}
where $\Lambda_n(j,p)$ is the determinant obtained from $\Lambda_n$ by replacing the column with index $q=(\sum_{l=0}^{j-1} m_{l})+p+1$ (where $m_0:=0$) with the column $[\tau_{n+1,n}\,\,\,\ldots \,\,\,\tau_{n+m,n} ]^{T}$ and $C_n(s,q)$ is the determinant of the $(s,q)^{\textup{th}}$ cofactor matrix of $\Lambda_n(j,p).$ Substituting $\beta_n(j,p)$ in (\ref{banana74}) with the expression in (\ref{betai}), we obtain
\begin{equation}\label{banana101}
a_{k,n}=\tau_{k,n}-\frac{1}{\Lambda_n} \sum_{j=1}^{\gamma} \sum_{p=0}^{m_j-1} \sum_{s=1}^{m} \tau_{n+s,n} C_n(s,q) \left(\frac{s_k}{s_n} \right)^{(p)}(\alpha_j), \quad k \geq n+m+1.
\end{equation}
Let $\delta>0$ and $\epsilon > 0$ be sufficiently small so that $\rho_0(F)-2\delta>1$, $$\{z\in \mathbb{C}:|z-\alpha_j|=\epsilon\}\subset \{z\in \mathbb{C}: |\Phi(z)|\geq \rho_0(F)-\delta\},$$ and
\begin{equation}\label{banana102}
\left(\frac{s_{k}}{s_n}\right)^{(p)}(\alpha_j) =\frac{p!}{2 \pi i} \int_{|z-\alpha_j|=\epsilon}\frac{s_{k}(z)}{s_n(z) (z-\alpha_j)^{p+1}}dz, \quad k=0,1\ldots,\quad p=0,\ldots,m_j-1.
\end{equation} Applying (\ref{banana8}) and (\ref{banana102}), we can easily check that
\begin{equation}\label{banana103}
\left|\left(\frac{s_{k}}{s_n}\right)^{(p)}(\alpha_j) \right|\leq c_2, \quad p=0,\ldots,m_j-1,\quad j=1,\ldots,\gamma,\quad k=n+1,\ldots,n+m,
\end{equation}
for  $n \geq n_1,$ and
\begin{equation}\label{banana104}
\left|\left(\frac{s_{k}}{s_n}\right)^{(p)}(\alpha_j) \right|\leq \frac{c_3}{(\rho_0(F)-2\delta)^{k-n}}, \quad p=0,\ldots,m_j-1,\quad j=1,\ldots,\gamma,\quad k\geq n+m+1,
\end{equation}
 for  $n \geq n_2.$
The inequality (\ref{banana103}) implies that
\begin{equation}\label{banana154}
|C_n(s,q)| \leq (m-1)! c_2^{m-1}=c_4, \quad s,q=1,\ldots,m.
\end{equation}
for $n\geq n_3.$
Combining  (\ref{banana101}), (\ref{banana103}), (\ref{banana104}), (\ref{banana154}), and $|\Lambda_n|\geq c_1>0,$ we see that for $n \geq n_4$
\begin{align}\label{banana156}
|a_{k,n}| &\leq |\tau_{k,n}|+\frac{mc_4c_3}{c_1}  \frac{1}{(\rho_0(F)-2\delta)^{k-n}} \sum_{s=1}^{m} |\tau_{n+s,n}| \notag\\
& \leq   |\tau_{k,n}|+ \frac{c_5}{(\rho_0(F)-2\delta)^{k-n}} \sum_{s=1}^{m} |\tau_{n+s,n}|,\quad\quad \quad\quad \quad   k\geq n+m+1.
\end{align}

Now, we estimate $|b_{\nu,n}|$ in terms of $|\tau_{k,n}|.$
By the Cauchy-Schwarz inequality and the orthonormality of $p_\nu$, we have
\begin{equation}\label{banana100}
|\langle Q p_k, p_\nu \rangle_{\mu}|^2 \leq \langle Q p_k, Qp_k \rangle_{\mu} \langle p_\nu,p_\nu  \rangle_{\mu} \leq \max_{z\in E}|Q(z)|^2 = c_6, \quad k,\nu=0,1,\ldots.
\end{equation}
By (\ref{banana156}), (\ref{banana100}), and the fact that $\sum_{k=n+m+1}^{\infty} (\rho_{0}(F)-2\delta)^{n-k}< \infty$, we obtain,   for $n$ sufficiently large and for all $\nu\geq 0,$
\begin{align}\label{banana12345}
|b_{\nu,n}| &\leq \sum_{k=n+m+1}^{\infty}|a_{k,n}| |\langle Q p_{k}, p_{\nu} \rangle|
 \leq \sqrt{c_6} \sum_{k=n+m+1}^{\infty}|a_{k,n}| \notag\\
 &\leq \sqrt{c_6} \left(\sum_{k=n+m+1}^{\infty} |\tau_{k,n}|+c_5 \sum_{k=n+m+1}^{\infty} \frac{1}{(\rho_{0}(F)-2\delta)^{k-n}}  \sum_{s=1}^{m} |\tau_{n+s,n}| \right) \quad\quad \quad\quad \notag\\
 &\leq c_7 \sum_{k=n+1}^{\infty} |\tau_{k,n}|.
\end{align}


Let $K$ be a compact subset of $D_{\rho_{m}(F)}$ and $\sigma>1$ be such that $K \subset\overline{D_\sigma} \subset D_{\rho_m(F)}.$  Choose $\delta>0$ sufficiently small so that
\begin{equation}\label{yuhyggt1}
\rho_2:=\rho_m(F)-\delta>\rho_{m-1}(F), \quad \rho_0(F)-2\delta>1, \quad  \textup{and} \quad
\frac{\sigma+\delta}{\rho_2-\delta}<1.
\end{equation}
We write (\ref{banna2}) in the form
\begin{equation}\label{banana189}
|Q(z)Q^{\mu}_{n,m}(z)F(z)-Q(z)P^{\mu}_{n,m}(z)|\leq \sum_{\nu =0}^{n+m}  |b_{\nu,n}|| p_\nu(z)|+\sum_{\nu =n+m+1}^{\infty}  |b_{\nu,n}|| p_\nu(z)|.
\end{equation}
Define
$$
A_n^1(z):=  \frac{\sum_{\nu =0}^{n+m}  |b_{\nu,n}|| p_\nu(z)|}{|Q(z)Q^{\mu}_{n,m}(z)|}\quad \textup{and} \quad
A_{n}^2(z):=\frac{\sum_{\nu =n+m+1}^{\infty}  |b_{\nu,n}|| p_\nu(z)|}{|Q(z)Q^{\mu}_{n,m}(z)|},
$$
and let $Q_{n,m}^{\mu}(z):=\prod_{j = 1}^{m_n}(z-\lambda_{n,j}).$
Then (\ref{banana189}) implies
 $$ \left|F(z)-\frac{P_{n,m}^{\mu}(z)}{Q_{n,m}^{\mu}(z)} \right|\leq A_n^1(z)+A_n^{2}(z),$$
for all $z\in \hat{D}_{\sigma}:= \overline{D}_{\sigma}  \setminus( \cup_{n=0}^{\infty}\{\lambda_{n,1},\ldots,\lambda_{n,m_n}\}\cup \{\lambda_1,\ldots,\lambda_m\}).$


Let  us bound $A_n^1(z)$ from above. We will first estimate
$|\tau_{k,n}/Q_{n,m}^{\mu}(z)|$
for $z \in \hat{D}_{\sigma}$ and for $k\geq n+1.$
By definition of $\tau_{k,n}$,
\begin{equation}\label{poiu45}
\frac{\tau_{k,n}}{Q_{n,m}^{\mu}(z)}=\frac{1}{2 \pi i} \int_{\Gamma_{\rho_2}} s_{k}(t) F(t)\frac{Q_{n,m}^{\mu}(t)}{Q_{n,m}^{\mu}(z)}dt, \quad k \geq n+1.
\end{equation}
For $n$ sufficiently large,
$$|s_{k}(t)|\leq \frac{c_8}{(\rho_2-\delta)^{k}}, \quad k\geq n+1.$$
Define $$Q_{n,m,\rho_2}^{\mu}(t)=\prod_{\lambda_{n,j}\in D_{\rho_2}}(t-\lambda_{n,j}).$$ It is easy to see that
$$\left|\frac{t-\zeta}{z-\zeta}\right| \leq c_9,$$
for all $t \in \Gamma_{\rho_2}, z\in \hat{D}_{\sigma}$, and $\zeta\in \mathbb{C}\setminus D_{\rho_2}$ (according to (\ref{yuhyggt1}), $\rho_2>\sigma$).
Then,
\begin{align}
\left|\frac{Q_{n,m}^{\mu}(t)}{Q_{n,m}^{\mu}(z)}\right| \leq {c_9}^{m} \left|\frac{Q_{n,m,\rho_2}^{\mu}(t)}{Q_{n,m,\rho_2}^{\mu}(z)} \right| \leq \frac{c_{10}}{|Q_{n,m,\rho_2}^{\mu}(z)|}, \quad z\in \hat{D}_{\sigma}, \quad t \in \Gamma_{\rho_2}.
\end{align}
By (\ref{poiu45}), we obtain
$$\left|\frac{\tau_{k,n}}{Q_{n,m}^{\mu}(z)}\right| \leq \frac{c_{11}}{|Q_{n,m,\rho_2}^{\mu}(z)|(\rho_2-\delta)^{k}}, \quad z\in \hat{D}_{\sigma},\quad k\geq n+1, \quad n\geq n_5,$$ which implies
\begin{equation}\label{banana5675}
\left|\frac{b_{\nu,n}}{Q_{n,m}^{\mu}(z)}\right| \leq \frac{c_{12}}{|Q_{n,m,\rho_2}^{\mu}(z)|(\rho_2-\delta)^{n}}, \quad z\in \hat{D}_{\sigma}, \quad n\geq n_6.
\end{equation}
Applying (\ref{banana9}) and the maximum modulus principle, we have
\begin{equation}\label{banana211}
|p_{\nu}(z)|\leq c_{13} (\sigma+\delta)^\nu,  \quad z\in \overline{D_{\sigma}} ,\quad \nu\geq 0.
\end{equation}
Using (\ref{banana5675}) and (\ref{banana211}), we obtain that
$$
A_n^1(z)
= \frac{1}{|Q(z)|} \sum_{\nu=0}^{n+m} \frac{|b_{\nu,n}| |p_\nu(z)|}{|Q_{n,m}^{\mu}(z)|}\leq \frac{(n+m+1)c_{12}c_{13} (\sigma+\delta)^{n+m}}{|Q(z) Q_{n,m,\rho_2}^{\mu}(z)|(\rho_2-\delta)^n}, \quad z\in \hat{D}_{\sigma}.$$
Choose $\theta>0$ such that $(\sigma+\delta)/(\rho_2-\delta)<\theta<1.$ Then, for $n$ sufficiently large,
\begin{align}\label{banana6789}
A_{n}^1(z) \leq \frac{c_{14} \theta^{n}}{|Q(z)Q_{n,m,\rho_2}^{\mu}(z)|}, \quad z\in \hat{D}_{\sigma}.
\end{align}

Next, we bound $A_n^2(z)$. Since $\deg(Q P_{n,m}^{\mu})\leq n+m,$ by a computation similar to (\ref{banana202}), we obtain
\begin{equation}\label{banana999}
b_{\nu,n}=\langle Q Q_{n,m}^{\mu} F,p_{\nu} \rangle_{\mu}=\frac{1}{2 \pi i}\int_{\Gamma_{\rho_2}} Q(t) Q_{n,m}^{\mu}(t) F(t) s_{\nu}(t) dt, \quad \nu\geq n+m+1.
\end{equation}
As before, from (\ref{banana9}) and (\ref{banana999}), we have
\begin{equation}\label{banana998}
\frac{|b_{\nu,n}|}{|Q(z)Q_{n,m}^{\mu}(z)|} \leq \frac{c_{15}}{|Q(z)Q_{n,m,\rho_2}^{\mu}(z)|(\rho_2-\delta)^{\nu}}, \quad z\in \hat{D}_{\sigma}, \quad \nu \geq n+m+1,
\end{equation}
for $n \geq n_7.$ Using (\ref{banana211}) and (\ref{banana998}), for $n$ sufficiently large, we obtain
\begin{align}\label{banana3456}
A_n^2(z)\leq \frac{c_{16} (\sigma+\delta)^{n}}{|Q(z)Q_{n,m,\rho_2}^{\mu}(z)|(\rho_1-\delta)^{n}}< \frac{c_{17} \theta^{n} }{|Q(z)Q_{n,m,\rho_2}^{\mu}(z)|},\quad z\in \hat{D}_{\sigma}.
\end{align}
Combining (\ref{banana6789}) and (\ref{banana3456}), for $n$ sufficiently large, we have
\begin{equation}\label{pqwer}
\left| F(z)-\frac{P_{n,m}^{\mu}(z)}{Q_{n,m}^{\mu}(z)} \right|\leq \frac{c_{18} \theta^{n}}{|Q(z)Q_{n,m,\rho_2}(z)|},\quad z\in \hat{D}_{\sigma}.
\end{equation}
Let $T_n(z):=Q(z)Q_{n,m,\rho_2}(z).$ Then, $T_n(z)$ is a monic polynomial of degree at most  $2m.$ Let $\varepsilon>0.$ Clearly,
$$e_n:=\left\{z\in \hat{D}_{\sigma}: \left| F(z)-\frac{P_{n,m}^{\mu}(z)}{Q_{n,m}^{\mu}(z)} \right| \geq \varepsilon \right\} \subset \left\{z\in \hat{D}_{\sigma}:\left|Q(z)Q_{n,m,\rho_2}(z) \right|\leq \frac{c_{18}\theta^{n}}{\varepsilon} \right\}=:E_n.$$
The logarithmic capacity is a monotonic set function and satisfies
$$\textup{cap} \,\{z\in \mathbb{C}: |z^n+a_{n-1} z^{n-1}+\ldots+a_0|\leq \rho^n \}=\rho, \quad \rho>0.$$
Hence, we find that for $n$ sufficiently large
$$\textup{cap}\,e_n\leq \textup{cap} \, E_n \leq \left(\frac{1}{\varepsilon} c_{18} \theta^n \right)^{1/\deg{T_n}}\leq  \left( \frac{1}{\varepsilon} c_{18} \theta^n \right)^{1/2m} \leq c_{19} \theta^{n/2m}.$$ This means that $\textup{cap}\{z\in \overline{D}_{\sigma}: \left| F(z)-\frac{P_{n,m}^{\mu}(z)}{Q_{n,m}^{\mu}(z)} \right| \geq \varepsilon \}=\textup{cap}\, e_n \rightarrow 0$, as $n \rightarrow \infty.$ This proves that $[n/m]_F^{\mu}$ converges in capacity to $F$ on each compact subset of $D_{\rho_m(F)},$ as $n \rightarrow \infty.$ On the other hand, the number of poles of $[n/m]_F^{\mu}$ in  $D_{\rho_m(F)}$ does not exceed $m$. Applying \cite[Lemma 1]{Gonchar1} it follows that $[n/m]_F^{\mu}$ converges to $F$ uniformly inside $D_{\rho_m(F)}\setminus\{\lambda_1,\ldots,\lambda_m\},$ as $n \rightarrow \infty$. In addition, we get that each pole of $F$ in $D_{\rho_m(F)}$ attracts as many zeros of $Q_{n,m}^{\mu}$ as its order. Therefore, $\deg Q_{n,m}^{\mu} = m$ for all sufficiently large $n$ which in turn implies that $[n/m]_F^{\mu}$ is uniquely determined for such $n$. We have obtained \eqref{supnroot} and \eqref{convzeros} except for the rate of convergence exhibited in those relations.

To prove (\ref{supnroot}), let $K$ be a compact subset of  $D_{\rho_m(F)}\setminus \{\lambda_1,\ldots \lambda_m\}.$ Take $\sigma$ to be the smallest positive number  $\geq 1$ such that $K \subset \overline{D_{\sigma}} \subset D_{\rho_{m}(F)},$ and choose an arbitrarily small number $\delta>0$ such that
$\rho_2$ satisfies  (\ref{yuhyggt1}).
Note that what we  proved above implies that
$$\max_{z\in D_{\rho_m(F)}}|Q_{n,m}^{\mu}(z)|\leq c_{20}.$$
From (\ref{banana9}), for $n\geq n_8,$
$$|b_{\nu,n}|=\left| \frac{1}{2 \pi i}\int_{\Gamma_{\rho_2}} Q(t) Q_{n,m}^{\mu}(t) F(t) s_{\nu}(t) dt\right| \leq  \frac{c_{21}}{(\rho_2-\delta)^{\nu}} \quad \nu \geq n+m+1,$$
\begin{equation} \label{44}
|\tau_{k,n}|=  \left| \frac{1}{2 \pi i}\int_{\Gamma_{\rho_2}}  Q_{n,m}^{\mu}(t) F(t) s_{k}(t) dt\right| \leq  \frac{c_{22}}{(\rho_2-\delta)^{k}}, \quad k\geq n+1.
\end{equation} Then, by (\ref{banana12345}) and \eqref{44}, for $n \geq n_9,$
$$|b_{\nu,n}| \leq c_7 \sum_{k=n+1}^{\infty} |\tau_{k,n}| \leq  \frac{c_{23}}{(\rho_2-\delta)^{n}} ,\quad  0 \leq \nu \leq n+m.$$
Using (\ref{banana211}), we can prove that for $z\in K$ and for $n \geq n_{10},$
\begin{equation} \label{additional1} |Q(z)Q_{n,m}^{\mu}(z)F(z)-Q(z)P_{n,m}^{\mu}(z)|\leq \sum_{\nu=0}^{\infty} |b_{\nu,n}| |p_{\nu}(z)|\leq c_{24} \left(\left(\frac{\sigma+\delta}{\rho_2-\delta} \right)+\delta \right)^{n}.
\end{equation}
Consequently, for $n \geq n_{10}$ we have
$$\left|F(z)-\frac{P_{n,m}^{\mu}(z)}{Q_{n,m}^{\mu}(z)}\right| \leq \frac{c_{25}}{|Q(z)Q_{n,m}^{\mu}(z)|} \left( \left(\frac{\sigma+\delta}{\rho_2-\delta} \right)+\delta \right)^{n}, \quad z\in K.$$
Since for $n$ sufficiently large, the zeros of $Q_{n,m}^{\mu}(z)$ are distant from $K,$ it follows that
$$\limsup_{n \rightarrow \infty} \|F-[n/m]_F^{\mu}\|_K^{1/n}\leq \left(\frac{\sigma+\delta}{\rho_2-\delta} \right)+\delta.$$
Letting $\delta\rightarrow 0^{+}$ and $\rho_2 \rightarrow \rho_m(F),$ we obtain (\ref{supnroot}).

Finally, we prove (\ref{convzeros}). We first need to show that for $k=1,\ldots,\gamma,$
\begin{equation}\label{additional3}
\limsup_{n \rightarrow \infty} |({Q_{n,m}^{\mu}})^{(j)}(\alpha_k)|^{1/n}\leq \frac{|\Phi(\alpha_k)|}{\rho_m(F)}, \quad j=0,\ldots,m_k-1.
\end{equation}
Let $\varepsilon>0$ be sufficiently small so that $\overline{\mathbb{B}(\alpha_k,\varepsilon)} \subset D_{\rho_{m}(F)}$ for all $k=1,\ldots,\gamma$ and the disks  $\overline{\mathbb{B}(\alpha_k,\varepsilon)},\, k=1, \ldots,\gamma,$ are pairwise disjoint.
As a consequence of (\ref{additional1}), we have
\begin{equation}\label{additional13}
\limsup_{n \rightarrow \infty} \|(z-\alpha_k)^{m_k} F Q_{n,m}^{\mu} -(z-\alpha_k)^{m_k} P_{n,m}^{\mu}\|_{\overline{\mathbb{B}(\alpha_k,\varepsilon)}}^{1/n} \leq \frac{\|\Phi\|_{\overline{\mathbb{B}(\alpha_k,\varepsilon)}}}{\rho_m(F)},
\end{equation} so by Cauchy's integral formula for the derivative, we obtain
\begin{equation}\label{additional4}
\limsup_{n \rightarrow \infty} \|\left[(z-\alpha_k)^{m_k} F Q_{n,m}^{\mu} -(z-\alpha_k)^{m_k} P_{n,m}^{\mu}\right]^{(j)}\|_{\overline{\mathbb{B}(\alpha_k,\varepsilon)}}^{1/n} \leq \frac{\|\Phi\|_{\overline{\mathbb{B}(\alpha_k,\varepsilon)}}}{\rho_m(F)},
\end{equation}
for all $j\geq 0.$ Since $\varepsilon >0$ can be taken arbitrarily small, this implies that
$$\limsup_{n \rightarrow \infty} |L_{k} Q_{n,m}^{\mu}(\alpha_k)|^{1/n} \leq \frac{|\Phi(\alpha_k)|}{\rho_m(F)},$$
where $L_k:=\lim_{z \rightarrow \alpha_k} (z-\alpha_{k})^{m_k}F(z)\not=0$ (because $F$ has a pole of order $m_k$ at $\alpha_k$). Therefore,
$$\limsup_{n \rightarrow \infty} |Q_{n,m}^{\mu}(\alpha_k)|^{1/n} \leq \frac{|\Phi(\alpha_k)|}{\rho_m(F)}.$$

Proceeding by induction, let $r\leq m_k-1$ and assume that
\begin{equation}\label{additional5}
\limsup_{n \rightarrow \infty} |(Q_{n,m}^{\mu})^{(j)}(\alpha_k)|^{1/n}\leq \frac{|\Phi(\alpha_k)|}{\rho_m(F)}, \quad j=0, \ldots, r-1.
\end{equation}
Let us show that the above inequality also holds for $j=r.$ Using (\ref{additional4}), since $r< m_k,$ we obtain
\begin{equation}\label{additinal7}
\limsup_{n \rightarrow \infty} |[(z-\alpha_k)^{m_k}F Q_{n,m}^{\mu}]^{(r)}(\alpha_k)|^{1/n} \leq \frac{|\Phi(\alpha_k)|}{\rho_m(F)}.
\end{equation}
By the Leibniz formula, we have
\begin{equation*}
[(z-\alpha_k)^{m_k} F Q_{n,m}^{\mu}]^{(r)}(\alpha_k)=\sum_{l=0}^{r} {r \choose l}  [(z-\alpha_k)^{m_k} F]^{(l)}(\alpha_k) (Q_{n,m}^{\mu})^{(r-l)}(\alpha_k).
\end{equation*} Therefore, by (\ref{additional5}), (\ref{additinal7}), and the fact that $L_k\not=0,$ it follows that
$$\limsup_{n \rightarrow \infty} |(Q_{n,m}^{\mu})^{(r)}(\alpha_k)|^{1/n} \leq \frac{|\Phi(\alpha_k)|}{\rho_m(F)}$$
which completes the induction and the proof of \eqref{additional3}.

Let $\{q_{k,s}\}_{k=1,\ldots, \gamma,\, s=0,\ldots, m_k-1}$ be a system of polynomials such that $\deg q_{k,s} \leq m-1$ for all $k,s$ and
$$q_{k,s}^{(i)}(\alpha_j)=\delta_{j,k} \delta_{i,s}, \quad 1\leq j \leq \gamma, \quad 0 \leq i \leq m_j-1.$$
It is not difficult to check that $q_{k,s}$ exist (using for example \cite[Theorem 1]{Sobczyk}).
Then,
$$Q_{n,m}^{\mu}(z)=\sum_{k=1}^{\gamma} \sum_{s=0}^{m_{k}-1} (Q_{n,m}^{\mu})^{(s)}(\alpha_k) q_{k,s}(z)+ Q_{m}(z).$$
This formula combined with (\ref{additional3}) imply
$$\limsup_{n \rightarrow \infty} \|Q_{n,m}^{\mu}- Q_{m}\|^{1/n} \leq \frac{\max_{k=1,\ldots, \gamma} |\Phi(\alpha_k)|}{\rho_m(F)}.$$
\end{proof}

\section{Proofs of inverse type results}


We begin stating two lemmas due to V.I. Buslaev (see  \cite[Theorems 5-6]{Vib2009}). These results constitute the basic tools for proving our inverse type results.  We make use of the following notation. Let $f(w) = \sum_{k=-\infty}^{\infty} f_k w^{k}$ be a Laurent series. We denote the regular part of $f(w)$ by $\hat{f}(w):=\sum_{k=0}^{\infty} f_k w^{k}.$ If $\hat{f}(w)$ is holomorphic at $0,$ we denote by $R_m(\hat{f})$ the  radius of the largest disk centered at the origin  to which $\hat{f}(w)$ can be extended as a meromorphic function with at most $m$ poles (counting multiplicities). Define the annulus
$$T_{\delta,m}(f):=\{w\in \mathbb{C}:e^{-\delta} R_0(\hat{f}) \leq |w| \leq e^{\delta} R_{m-1}(\hat{f})\},$$
where $m\in \mathbb{N}$ and $\delta \geq 0$.
We will use $[\cdot]_n$ to denote the coefficient of $w^n$ in the Laurent series expansion around $0$ of the function in the square brackets.
Set $$U:=\overline{\mathbb{C}} \setminus \overline{\mathbb{B}}.$$


\begin{lemma}[Buslaev \cite{Vib2009}]\label{Buslaev2}
Let $m\in \mathbb{N}, \delta>0$, and let $f(w)=\sum_{n=- \infty }^\infty f_n w^n$ be a Laurent series such that
$$0<R_0(\hat{f})\leq R_{m-1}(\hat{f})< \infty, \quad \mbox{and} \quad \varlimsup_{n \rightarrow \infty} |f_{-n}|^{1/n} \leq R_0(\hat{f}).$$
Assume further that
\begin{equation}\label{buslaevlemma2.1}
\lim_{n \rightarrow \infty} [f \alpha_n \eta_{n,j}]_n R_{m-1}^n(\hat{f}) e^{\delta n}=0, \quad j=0,\ldots, m-1,
\end{equation}
where the functions $\alpha_n, \eta_{n,j} \in H(T_{\delta,m}(f))$ have the limits
\begin{equation*}
\alpha(w):=\lim_{n \rightarrow \infty} \alpha_n (w) \not\equiv 0, \quad  \eta_j(w) :=\lim_{n\rightarrow \infty}\eta_{n,j} (w)= \eta^j (w),\quad j=0,\ldots, m-1,
\end{equation*}
uniformly in $T_{\delta,m}(f),$ $\eta(w)$ is a univalent function in $T_{\delta,m} (f),$ and $\alpha(w)$ has at most $m$ zeros in the annulus $T_{0,m}(f)$. Then the function $\alpha(w)$ has precisely $m$ zeros $\tau_1, \ldots ,\tau_m$ in $T_{0,m}(f)$ and $\lim_{n \rightarrow \infty} \tau_{n,j} =\tau_j,$ where the $\tau_{n,j}, j=1,\ldots,m,$ are poles of the classical approximants $[n/m]_{\hat{f}}(w)$. Moreover, for any functions $K_{n,1},\ldots, K_{n,m},L_{n,1},\ldots,  L_{n,m} \in H(T_{\nu,m}(f)), \, \nu>0,$ that converge to $K_1, \ldots, K_m, \, L_1, \ldots, L_m$ uniformly on $T_{\nu,m}(f),$
\begin{equation}\label{buslaevlemma2.3}
\lim_{n \rightarrow \infty} \frac{\det([f K_{n,i} L_{n,j}]_n)_{i,j=1,\ldots, m }}{\det(f_{n-i-j})_{i,j=0,\ldots, m-1}}=\frac{\det(K_r (\tau_s))_{s,r=1,\ldots, m} \det(L_r(\tau_s))_{s,r =1,\ldots ,m}}{W^2 (\tau_1,\ldots, \tau_m)},
\end{equation}
where $W(\tau_1, \ldots, \tau_m)= \det(\tau_s^{r-1})_{s,r=1,\ldots,m} $ is the Vandermonde determinant of the numbers $\tau_1,\ldots, \tau_m$ \textup{(}for multiple zeros the right-hand side of \textup{(\ref{buslaevlemma2.3})} is defined by continuity\textup{)}. In particular, for any $k_1,\ldots, k_m,q_1, \ldots,q_m \in \mathbb{Z},$ the limits
$$\lim_{n \rightarrow \infty} \frac{\det(f_{n-k_i-q_j})_{i,j=1,\ldots,m}}{\det(f_{n-i-j})_{i,j=0,\ldots,m-1}}=\frac{\det(\tau_s^{k_r})_{s,r=1,\ldots,m} \det(\tau_s^{q_r})_{s,r=1,\ldots,m}}{W^2(\tau_1, \dots, \tau_m)}$$
exist.
\end{lemma}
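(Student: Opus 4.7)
The plan is to reduce everything to the contour-integral representation of Laurent coefficients and an asymptotic analysis of Hankel determinants formed from the $f_n$. The three parts of the statement (existence of exactly $m$ zeros of $\alpha$, convergence $\tau_{n,j}\to\tau_j$, and the determinantal identity \textup{(3.6)}) will all be read off from a single master asymptotic expansion.

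The first step is the master expansion. For any $g\in H(T_{\delta,m}(f))$ and any $\rho$ with $\limsup|f_{-n}|^{1/n}<\rho<R_0(\hat f)$,
$$[fg]_n \;=\; \frac{1}{2\pi i}\oint_{|w|=\rho}f(w)g(w)w^{-n-1}\,dw.$$
Deforming the contour outward to $|w|=R_{m-1}(\hat f)e^{\delta}$ expresses $[fg]_n$ as the sum of the residues at the poles $\beta_1,\dots,\beta_p$ (with $p\le m-1$) of $\hat f$ in that annulus, plus an error of size $O\bigl((R_{m-1}(\hat f)e^\delta)^{-n}\bigr)$. Because each residue at $\beta_\ell$ has the form $\beta_\ell^{-n-1}$ times a Hermite-interpolation functional applied to $g$, the hypothesis \textup{(3.3)} translates, to leading order, into an asymptotic orthogonality condition linking the values (and derivatives) of $\alpha$ at the $\beta_\ell$ with the test powers $\eta^j$.

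Next I would represent the monic denominator $Q_n(w)=\prod_{j=1}^m(w-\tau_{n,j})$ of the classical Pad\'e approximant $[n/m]_{\hat f}$ via Cramer's rule applied to the defining system $[\hat f\,Q_n]_{n+j}=0$, $j=1,\dots,m$. Its coefficients then appear as ratios of Hankel determinants $\det(f_{n-i-j})$, and the master expansion, applied entrywise, gives an asymptotic formula for $Q_n$ in terms of the $\beta_\ell$ and their residue weights. The identical computation with $\alpha_n$ in place of $Q_n$ shows that $\alpha$ and the limit of $Q_n$ must share the same zero set; the non-vanishing of the Vandermonde determinant $W(\tau_1,\dots,\tau_m)$ forces this zero set to consist of exactly $m$ points $\tau_1,\dots,\tau_m$ in $T_{0,m}(f)$. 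A Rouch\'e argument on small circles around each $\tau_j$ then yields simultaneously the exact-$m$-zero statement for $\alpha$ and the convergence $\tau_{n,j}\to\tau_j$. The determinantal identity \textup{(3.6)} follows by expanding each numerator entry $[fK_{n,i}L_{n,j}]_n$ via the master expansion and applying the Cauchy--Binet formula, which factors the resulting $m\times m$ determinant as a product of two Vandermonde-type determinants in the values $K_r(\tau_s)$ and $L_r(\tau_s)$; the denominator $\det(f_{n-i-j})$ is the special case and contributes $W^2(\tau_1,\dots,\tau_m)$.

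The principal obstacle will be uniformity and degeneracy. The error terms in the contour integrals must be controlled uniformly in the merely-convergent sequences $\alpha_n,\eta_{n,j},K_{n,i},L_{n,j}$, exploiting only their uniform convergence on $T_{\delta,m}(f)$; when some $\tau_j$ coincide, both the Vandermonde formulas and the Cauchy--Binet factorization must be extended by a confluence/continuity argument; and one must verify that the Hankel determinant appearing in the denominator of \textup{(3.6)} is eventually nonzero, which will follow from the non-degeneracy of the limit configuration $\tau_1,\dots,\tau_m$.
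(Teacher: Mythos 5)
First, a point of context: the paper does not prove this statement at all. It is quoted verbatim from Buslaev (Theorems 5--6 of \cite{Vib2009}) and used as a black box, so there is no in-paper proof to compare against; your proposal has to be judged on its own.

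Judged that way, it has a fatal gap at the very first step. Your ``master expansion'' deforms the contour for $[fg]_n$ outward to $|w|=R_{m-1}(\hat f)e^{\delta}$ and writes the result as a sum of residues at the poles of $\hat f$ plus an error $O\bigl((R_{m-1}(\hat f)e^{\delta})^{-n}\bigr)$. This presupposes that $\hat f$ extends meromorphically (with only polar singularities) to the larger disk $|w|<R_{m-1}(\hat f)e^{\delta}$. Nothing in the hypotheses grants this: $R_{m-1}(\hat f)$ is by definition the radius of the \emph{largest} disk of $m{-}1$-meromorphy, so the circle $|w|=R_{m-1}(\hat f)$ carries a singularity that need not be a pole and past which no continuation of any kind is guaranteed. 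If one \emph{did} assume such continuation, the lemma would degenerate into an easy Montessus-type direct statement; the entire content of this Fabry-type inverse result is that it detects singularities (not necessarily polar) on the critical circles without assuming continuability. The case $m=1$ makes the collapse explicit: there $\hat f$ has no poles in $|w|<R_0(\hat f)$, your residue sum is empty, the ``expansion'' asserts $[f\alpha_n]_n=O\bigl((R_0(\hat f)e^{\delta})^{-n}\bigr)$ unconditionally (which is false), and no information about the location of the zero $\tau$ of $\alpha$ on $|w|=R_0(\hat f)$ can be extracted. There is also a secondary issue at the inner radius: you take $\limsup_n|f_{-n}|^{1/n}<\rho<R_0(\hat f)$, but the hypothesis only gives $\le$, so the annulus of convergence of the Laurent series may be empty and $[fg]_n$ must be handled as a coefficient convolution rather than a contour integral of $fg$.

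The downstream steps (Hankel-determinant representation of $Q_n$, Rouch\'e localization of the $\tau_{n,j}$, Cauchy--Binet for the determinantal identity \eqref{buslaevlemma2.3}) are the right shape \emph{once} one has an asymptotic formula for $[fK_{n,i}L_{n,j}]_n$ in terms of point evaluations at $\tau_1,\dots,\tau_m$, but that formula is precisely what cannot be obtained by residue calculus here. Buslaev's actual proof reaches it by entirely different means (an extension of Poincar\'e's theorem on difference equations, building on Suetin's analysis of row sequences, with Fabry's theorem as the underlying engine), and that machinery cannot be bypassed by a contour deformation.
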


The assumptions $R_{m-1}(\hat{f})< \infty$ and (\ref{buslaevlemma2.1}) in Lemma 2 can be replaced by the following: the functions $\alpha_n(w)$ and $w^{-j} \eta_{n,j}(w)$ are holomorphic in the set $\overline{\mathbb{C}}\setminus \mathbb{B}(0,e^{-\delta} R_0(\hat{f}))$, and
$$[f \alpha_n \eta_{n,j}]_n=0, \quad j=0,\ldots, m-1, \quad n \geq n_0.$$
Hence, we also have

\begin{lemma}[Buslaev \cite{Vib2009}]\label{Buslaev3}
Let $m \in \mathbb{N}$, $\sigma>1,$ and $f(w)= \sum_{n=-\infty}^{\infty} f_n w^n$  be a holomorphic function in the annulus $\{1 < | w|< \sigma\}.$ Assume further that
\begin{equation}\label{buslaevlemma3.1}
[f \alpha_n \eta_{n,j}]_n=0, \quad j=0,\ldots ,m-1, \quad n \geq n_0,
\end{equation} hold,
where $\alpha_n (w)$  and $w^{-j} \eta_{n,j}(w)$ are holomorphic functions in $U$, the limits
\begin{equation*}
\alpha(w):=\lim_{n \rightarrow \infty} \alpha_n (w) \not\equiv 0, \quad  \eta_j(w) :=\lim_{n\rightarrow \infty}\eta_{n,j} (w)= \eta^j (w),\quad j=0,\ldots, m-1,
\end{equation*}
exist uniformly inside $U\setminus\{\infty\}$, the function $\alpha(w)$ has at most $m$ zeros in $U \setminus\{\infty\},$ and $\eta(w)$ is a univalent function in $U$ such that $\eta(\infty)=\infty$. Then, only one of the following assertions takes place:
\begin{enumerate}
\item[$(i)$] $\hat{f}(w)$ is a rational function with at most $m-1$ poles;
\item[$(ii)$] $\alpha(w)$ has precisely $m$ zeros $\tau_1,\ldots, \tau_m$ in $U \setminus\{\infty\}$, these zeros are singularities of $f(w)$, with an appropriate ordering $|\tau_1|=R_0(\hat{f}),\ldots, |\tau_m|=R_{m-1}(\hat{f}),$ and the limits $\lim_{n \rightarrow \infty} \tau_{n,j} =\tau_j$ exist, where the $\tau_{n,j},j=1,\ldots,m,$ are the poles of the classical Pad\'e approximants $[n/m]_{\hat{f}}(w).$
 \end{enumerate}
\end{lemma}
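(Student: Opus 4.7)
The plan is to follow the approach announced in the paragraph preceding the theorem: use the Szegő asymptotics to translate the orthogonal-Padé orthogonality relations in the $z$-variable into classical-Padé-type coefficient conditions in the $w$-variable (where $w=\Phi(z)$), and then read off the equivalence (a)$\Leftrightarrow$(b) from Buslaev's Lemmas \ref{Buslaev2} and \ref{Buslaev3}. The starting point is a translation formula for the Fourier moments.

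Concretely, using the contour representation $\langle z^k F, p_n\rangle_\mu = (2\pi i)^{-1}\int_{\Gamma_\rho} z^k F(z) s_n(z)\,dz$ for $1<\rho<\rho_0(F)$ (obtained as in (\ref{banana202})), combining the Szegő asymptotic $p_n\sim c_n S\,\Phi^n$ with Lemma \ref{secondtype} to write
$$
s_n(z) = \frac{\Phi'(z)}{c_n S(z)\Phi(z)^{n+1}}\bigl(1+\varepsilon_n(z)\bigr), \qquad \varepsilon_n\to 0 \text{ uniformly on } \Gamma_\rho,
$$
and then changing variables by $w=\Phi(z)$, I obtain
$$
c_n\,\langle z^k F, p_n\rangle_\mu = \bigl[\Psi(w)^k\,\tilde S(w)\,f(w)\,(1+\tilde\varepsilon_n(w))\bigr]_n,
$$
where $\tilde S(w):=1/S(\Psi(w))$ is holomorphic and non-vanishing on $U\setminus\{\infty\}$ and $[\,\cdot\,]_n$ denotes the $n$-th Laurent coefficient at $0$. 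Multiplying the orthogonality relations $\sum_{k=0}^m q_k\langle z^kF,p_{n+l}\rangle_\mu=0$, $l=1,\ldots,m$, by $c_{n+l}$ then recasts them into
$$
\bigl[\,f(w)\,\alpha_n(w)\,\eta_{n,j}(w)\,\bigr]_n = 0, \qquad j=0,\ldots,m-1,
$$
with $\alpha_n(w)=Q_{n,m}^\mu(\Psi(w))\,\tilde S(w)$, $\eta_{n,j}(w)=(1+\tilde\varepsilon_{n+j+1}(w))\,w^{-j-1}$, and $\eta(w)=1/w$, which is precisely the shape handled by Buslaev's lemmas.

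For the direction (b)$\Rightarrow$(a), the hypothesis $\lambda_{n,j}\to\lambda_j$ gives $Q_{n,m}^\mu(z)\to\prod_j(z-\lambda_j)$ coefficient-wise, so $\alpha_n\to\alpha(w):=\prod_j(\Psi(w)-\lambda_j)\,\tilde S(w)$ uniformly on compacta of $U\setminus\{\infty\}$. Since $\tilde S$ is non-vanishing in $U$ and $\Psi$ bijects $U$ onto $\overline{\mathbb C}\setminus E$, the function $\alpha$ has exactly the $m$ zeros $\Phi(\lambda_j)$ in $U$. Applying Lemma \ref{Buslaev2} on the annulus $T_{\delta,m}(f)$, where $\eta(w)=1/w$ is univalent and the remaining hypotheses can be verified, identifies these $m$ zeros with the limits of the classical-Padé denominator roots of $[n/m]_{\hat f}$, so $\tau_{n,j}\to\Phi(\lambda_j)$, which is (a). For the converse (a)$\Rightarrow$(b), I invoke the determinantal limit (\ref{buslaevlemma2.3}). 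By Cramer's rule each coefficient $q_k$ of $Q_{n,m}^\mu$ is a ratio of $m\times m$ determinants in the entries $c_{n+l}\langle z^k F,p_{n+l}\rangle_\mu$; the translation formula replaces these (modulo controllable errors) by entries of the form $[f\cdot w^{-i}\cdot\Psi(w)^{j-1}\tilde S(w)]_n$. Applying (\ref{buslaevlemma2.3}) with $K_{n,i}(w)=w^{-i}$ and $L_{n,j}(w)=\Psi(w)^{j-1}\tilde S(w)$, the $\det(L_r(\tau_s))$ and Vandermonde factors cancel in the ratio, leaving $\lim_n q_k$ equal to the $k$-th coefficient of $\prod_j(z-\Psi(\tau_j))$. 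Hence $Q_{n,m}^\mu(z)\to\prod_j(z-\Psi(\tau_j))$ coefficient-wise, forcing $\lambda_{n,j}\to\Psi(\tau_j)$, which is (b).

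The main obstacle lies in justifying Buslaev's lemmas in the presence of the multiplicative errors $\tilde\varepsilon_n(w)$. Lemma \ref{Buslaev2} demands the geometric decay $[f\alpha_n\eta_{n,j}]_n R_{m-1}(\hat f)^n e^{\delta n}\to 0$ of (\ref{buslaevlemma2.1}), not merely $\varepsilon_n\to 0$; one must therefore split $(1+\tilde\varepsilon_{n+l})$ into a factor absorbable into $\eta_{n,j}$ (converging uniformly on an annulus slightly larger than $|w|=R_{m-1}(\hat f)$) and a residual whose Laurent coefficient decays faster than $R_{m-1}(\hat f)^{-n}$. This in turn rests on identifying $R_{m-1}(\hat f)$ with $\rho_{m-1}(F)$ through the decomposition $f=\hat f+\check f$ on $1<|w|<\rho_0(F)$, with $\check f$ holomorphic on $|w|>1$, so that meromorphic singularities of $\hat f$ correspond via $\Psi$ to those of $F$ in the canonical domains.
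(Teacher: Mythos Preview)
Your proposal does not address the stated lemma at all. The statement you were asked to prove is Lemma~\ref{Buslaev3}, a general result of Buslaev about Laurent series $f(w)=\sum f_n w^n$ and auxiliary sequences $\alpha_n,\eta_{n,j}$ of holomorphic functions on $U$; it makes no reference to orthogonal polynomials, the measure $\mu$, the denominators $Q_{n,m}^\mu$, or Szeg\H{o} asymptotics. What you have written is a proof sketch for Theorem~\ref{thm6} (the equivalence of conditions (a) and (b) relating the poles of $[n/m]_{\hat f}$ to the poles of $[n/m]_F^\mu$), which \emph{applies} Lemma~\ref{Buslaev3} rather than proving it. Indeed, your ``direction (b)$\Rightarrow$(a)'' and ``converse (a)$\Rightarrow$(b)'' correspond exactly to the two halves of the paper's proof of Theorem~\ref{thm6}, and your invocation of ``Buslaev's lemmas'' presupposes the very statement you were meant to establish.

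Note also that the paper does not prove Lemma~\ref{Buslaev3}: it is quoted from \cite{Vib2009} (Theorems~5--6 there) as an external tool, so there is no ``paper's own proof'' to compare against. If your intention was actually to prove Theorem~\ref{thm6}, then your outline is broadly in line with the paper's argument, though the paper avoids your error-term splitting difficulty by working directly with $h_n(w):=c_n w^{n+1}s_n(\Psi(w))\Psi'(w)$ (Lemma~\ref{functionh}) so that the identities $[f\alpha_n\eta_{n,j}]_n=0$ hold \emph{exactly}, not merely up to a residual; this makes the verification of (\ref{buslaevlemma3.1}) immediate and removes the ``main obstacle'' you flag in your last paragraph.
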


Define
\[ h_n(w):=c_n w^{n+1} s_n(\Psi(w)) \Psi'(w).
\]

\begin{lemma}\label{functionh}
Let $F\in \mathcal{H}(E).$ Define $f(w):=F(\Psi(w)).$
The functions $h_n(w)$ are holomorphic in $U,$ $F_n=[f h_n]_n/c_n$ and $\langle z F, p_n\rangle_{\mu}= [\Psi f h_n]_n/c_n.$
If $\mu \in \mathcal{S}(E),$  then the sequence $h_n(w)$ converges to some non-vanishing function $h(w)$ uniformly inside $U.$
\end{lemma}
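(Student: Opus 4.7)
The plan is to verify the three assertions in order, since they are essentially independent. First I would check that $h_n \in H(U)$. The second type function $s_n$ is holomorphic in $\overline{\mathbb{C}}\setminus\mathrm{supp}(\mu)\supset\overline{\mathbb{C}}\setminus E$, so $s_n(\Psi(w))\Psi'(w)$ is holomorphic throughout $U\setminus\{\infty\}$ since $\Psi$ maps $U$ conformally onto $\overline{\mathbb{C}}\setminus E$. The only delicate point is the behavior at $w=\infty$: using the orthogonality relations $\langle \zeta^{k},p_n\rangle_\mu=0$ for $0\leq k<n$, the Laurent expansion of $s_n$ at $\infty$ begins at order $n+1$, giving $s_n(z)=(1/\kappa_n)z^{-(n+1)}+O(z^{-(n+2)})$. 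Because $\Psi(w)=w/\Phi'(\infty)+O(1)$ and $\Psi'(w)\to 1/\Phi'(\infty)$ as $w\to\infty$, the factor $w^{n+1}$ precisely cancels the pole, so $h_n$ extends holomorphically to $\infty$.

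For the integral representations, I would pick $\rho>1$ small enough that $F$ is holomorphic in an open neighborhood of $\overline{D_\rho}$. For $z\in\mathrm{supp}(\mu)\subset E\subset D_\rho$, Cauchy's formula yields $F(z)=(2\pi i)^{-1}\int_{\Gamma_\rho}F(t)(t-z)^{-1}\,dt$. Substituting into the definition of $F_n$ and interchanging the order of integration via Fubini (the integrand is bounded on $\Gamma_\rho\times\mathrm{supp}(\mu)$), I obtain
\begin{equation*}
F_n=\frac{1}{2\pi i}\int_{\Gamma_\rho}F(t)\,s_n(t)\,dt.
\end{equation*}
Changing variables via $t=\Psi(w)$ with $|w|=\rho$ traversed positively turns the integrand into $f(w)h_n(w)/(c_n w^{n+1})$, so $F_n=[fh_n]_n/c_n$ by the formula for Laurent coefficients. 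Repeating the argument with $zF\in\mathcal{H}(E)$ in place of $F$ gives $\langle zF,p_n\rangle_\mu=[\Psi f h_n]_n/c_n$.

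For the final assertion I would combine Lemma \ref{secondtype} (which applies since $\mathcal{S}(E)\subset\mathcal{R}(E)$) with the Szeg\H{o} asymptotics \eqref{szegoasym}. Writing
\begin{equation*}
c_n\Phi(z)^{n+1}s_n(z)=\frac{c_n\Phi(z)^n}{p_n(z)}\cdot p_n(z)s_n(z)\cdot\Phi(z),
\end{equation*}
the three factors converge uniformly inside $\overline{\mathbb{C}}\setminus E$ to $1/S(z)$, $\Phi'(z)/\Phi(z)$, and $\Phi(z)$ respectively, so the left side converges uniformly inside $\overline{\mathbb{C}}\setminus E$ to $\Phi'(z)/S(z)$. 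Substituting $z=\Psi(w)$, using $\Phi(\Psi(w))=w$ and $\Phi'(\Psi(w))=1/\Psi'(w)$, and multiplying by $\Psi'(w)$ yields
\begin{equation*}
h_n(w)\longrightarrow h(w):=\frac{1}{S(\Psi(w))}
\end{equation*}
uniformly inside $U$, with $h$ nowhere zero because $S$ is nowhere zero on $\overline{\mathbb{C}}\setminus E$. The main obstacle, such as it is, is bookkeeping the uniform convergence all the way up to $w=\infty$; this is handled by observing that every convergence statement invoked holds uniformly inside $\overline{\mathbb{C}}\setminus E$ (or $U$), which by definition includes neighborhoods of $\infty$.
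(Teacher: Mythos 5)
Your proposal is correct and follows essentially the same route as the paper: Cauchy's formula plus Fubini to express $F_n$ as a contour integral of $Fs_n$ and hence as a Laurent coefficient, and the factorization of $c_nw^{n+1}s_n(\Psi(w))\Psi'(w)$ into the Szeg\H{o} ratio and $p_ns_n$ (via Lemma \ref{secondtype}) for the limit $h=1/(S\circ\Psi)$. Your verification of holomorphy at $w=\infty$ via the expansion $s_n(z)=\kappa_n^{-1}z^{-(n+1)}+O(z^{-(n+2)})$ is a welcome elaboration of a step the paper dismisses as clear.
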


\begin{proof}[Proof]
 Clearly, $h_n(w)$ is holomorphic in $U.$
Let $\epsilon>0$ be a small number so that  $\Gamma_{1+\varepsilon}$ is in the domain of holomorphy of $F(z).$
By Fubini's theorem and Cauchy's integral formula, we have
$$
F_n = \int F(z) \overline{p_n(z)} d\mu(z)
        =  \int  \left( \frac{1}{2 \pi i} \int_{\Gamma_{1+\varepsilon}} \frac{F(\zeta)}{\zeta -z}d\zeta \right) \overline{p_n(z)} d\mu(z)$$$$
        =   \frac{1}{2 \pi i} \int_{\Gamma_{1+\varepsilon}} F(\zeta)        \int \frac{\overline{p_n(z)}}{\zeta -z} d\mu(z)    d\zeta
        = \frac{1}{2 \pi i} \int_{\Gamma_{1+\varepsilon}} F(\zeta) s_n(\zeta) d\zeta $$$$
        = \frac{1}{2 \pi i}  \int_{\gamma_{1+\varepsilon}} f(w) s_n(\Psi(w)) \Psi'(w) dw
        =   \frac{1}{c_n}\frac{1}{2 \pi  i} \int_{\gamma_{1+\varepsilon}} \frac{f(w) h_n(w)}{w^{n+1}} dw
        =  \frac{1}{c_n} [f h_n]_n.
$$
The other formula is obtained similarly.

If $\mu \in \mathcal{S}(E)$ then $\mu \in \mathcal{R}(E)$ and using Lemma \ref{secondtype}, we have
$$
 h(w) := \lim_{n \rightarrow \infty} h_n(w)=\lim_{n \rightarrow \infty}c_n w^{n+1} s_n(\Psi(w)) \Psi'(w)$$$$=w \Psi'(w) \lim_{n \rightarrow \infty} \frac{c_n w^{n}}{p_n(\Psi(w))} \lim_{n \rightarrow \infty}p_n(\Psi(w))s_n(\Psi(w))=\frac{1}{S(\Psi(w))},
$$
uniformly inside $U.$
\end{proof}

\begin{proof}[Proof of Theorem \ref{equivfirstpole}]
Using Lemma \ref{Buslaev3} for $m=1$, we will prove that $(a)$ or $(b)$ imply $(c)$.
Let $$\tau_n:= \frac{F_n}{F_{n+1}}, \quad  \lambda_n: =\frac{\langle zF,p_n \rangle_{\mu}}{F_n}.$$
Set $\eta_{n,0}(w)\equiv 1,  w\in U,$ and define $$\alpha_{n,1}(w):=\frac{c_n}{c_{n+1}}\frac{\tau_{n}  h_{n+1}(w)}{w} -h_n(w), \quad \alpha_{n,2}(w):=\frac{h_{n+1}(w) (\lambda_{n+1}-\Psi(w))}{w}, \quad w\in U.$$ The functions $\alpha_{n,1}(w)$ and $\alpha_{n,2}(w)$ are holomorphic in $U.$
By Lemma \ref{functionh}, for $\varepsilon>0$ sufficiently small so that $f(w)$ is holomorphic in a neighborhood of $\gamma_{1+\varepsilon},$
$$
[f\alpha_{n,1}]_n =\frac{c_n}{c_{n+1}}\frac{\tau_n}{2 \pi i}\int_{\gamma_{1+\varepsilon}} \frac{f(w) h_{n+1}(w)}{w^{n+2}} dw-\frac{1}{ 2 \pi i}\int_{\gamma_{1+\varepsilon}} \frac{f(w) h_n(w)}{w^{n+1}} dw $$$$
                            =\frac{c_n}{c_{n+1}}\frac{F_n}{F_{n+1}} [f h_{n+1}]_{n+1}-[f h_{n}]_{n}
                             =0
$$
and
$$[f \alpha_{n,2}]_n =\frac{\lambda_{n+1}}{2 \pi i} \int_{\gamma_{1+\varepsilon}} \frac{f(w) h_{n+1}(w)}{w^{n+2}} dw-\frac{1}{ 2 \pi i}\int_{\gamma_{1+\varepsilon}}\frac{\Psi(w)f(w) h_{n+1}(w)}{w^{n+2}}dw$$
$$=\frac{c_{n+1}}{c_{n+1}}\frac{\langle zF,p_{n+1} \rangle_{\mu}}{F_{n+1}} [f h_{n+1}]_{n+1}-[\Psi f h_{n+1}]_{n+1}=0.$$
If $(a)$ holds, then
$$\alpha_1(w):=\lim_{n \rightarrow \infty} \alpha_{n,1} (w)= h(w) \left(\frac{\tau}{w}  -1 \right), \quad \textup{uniformly inside $U$},$$ and if $(b)$ holds, then
$$\quad  \alpha_2(w):=\lim_{n \rightarrow \infty} \alpha_{n,2}(w)=\frac{ h(w)(\lambda-\Psi(w))}{w}, \quad \textup{uniformly inside $U$}.$$
Since $h(w)$ is never zero on $U$, each function $\alpha_j(w)$, $j=1,2,$ has at most one zero in $U$ (which is $\tau$). By Lemma \ref{functionh}, $\alpha_1(\infty)=-h(\infty)\not=0$ and $\alpha_2(\infty)=-\textup{cap}(E)h(\infty)\not=0.$
Moreover, if $f_n=0$ for $n \geq n_0,$ then
$F_n=[f h_n]_n=0$ (recall that $h_n(w)$ is analytic at $\infty$). Therefore, by (ii) in Lemma \ref{Buslaev3}, $|\tau|>1$ and $\lim_{n\rightarrow \infty} f_n/f_{n+1}=\tau.$

Now, using Lemma \ref{Buslaev2} for $m=1$, we prove that $(c)$ implies $(a)$ and $(b)$.
Assume that
$\lim _{n\rightarrow \infty} f_n/f_{n+1}=\tau.$ Set $\eta_{n,0}(w)\equiv 1,$
$$\tau_n:=\frac{f_n}{f_{n+1}},\quad \textup{and} \quad \alpha_n(w):=\frac{\tau_n }{w}-1, \quad z\in U.$$ Therefore, $$[f \alpha_n]_n=\tau_n f_{n+1}- f_n=0,$$
$$\alpha(w):=\lim_{n \rightarrow \infty} \alpha_n(w)=\frac{\tau}{w}-1,\quad \textup{uniformly inside $U$},$$
$\alpha(\infty)=-1$, and $\alpha(w)$ has at most one zero in $U.$
Applying (\ref{buslaevlemma2.3}) in Lemma \ref{Buslaev2}, if we select $K_{n,1}(w)=h_n(w)$ and $L_{n,1}(w)=1$, we have
\begin{align*}
\lim_{n \rightarrow \infty} \frac{[f h_n]_n}{f_n}=h(\tau),
\end{align*}
and if we select $K_{n,1}(w)=\Psi(w)h_n(w)$ and $L_{n,1}(w)=1$, we have
$$\lim_{n \rightarrow \infty}\frac{[\Psi f  h_n]_{n}}{f_n}=  \Psi(\tau)h(\tau).$$
Since $h(w)$ vanishes nowhere in the domain $U$,
$$\lim_{n\rightarrow \infty}\frac{F_n}{F_{n+1}}=\lim_{n\rightarrow \infty}\frac{c_{n+1}}{c_{n}}\frac{[fh_n]_n}{[fh_{n+1}]_{n+1}} =\lim_{n \rightarrow \infty}\frac{c_{n+1}}{c_{n}}\frac{[fh_n]_n}{f_n} \frac{f_n}{f_{n+1}}\frac{f_{n+1}}{[fh_{n+1}]_{n+1}}=\tau,$$ and
$$\lim_{n\rightarrow \infty}\frac{ \langle z F, p_n \rangle_{\mu}}{F_n}=\lim_{n \rightarrow \infty} \frac{c_n}{c_{n}}\frac{[\Psi f h_n]_n}{[f h_n]_n}=\lim_{n \rightarrow \infty}\frac{[\Psi f h_n]_n}{f_n} \frac{f_n}{[f h_n]_n}=\Psi(\tau)=\lambda.$$ The proof is complete.
\end{proof}

\begin{proof}[Proof of Theorem \ref{thm6}]
First of all, we prove that $(b)$ implies $(a)$ using Lemma \ref{Buslaev3}. We assume that the zeros of $Q_{n,m}^{\mu}(z)$ have limits $\lambda_1,\ldots,\lambda_m$, as $n \rightarrow \infty$. For $w\in U,$ we define
$$\alpha_n(w):=w^{-m}h(w) Q_{n,m}^{\mu}(\Psi(w)),$$
$$\eta_{n,j}(w):=  \frac{c_{n+m-j}    w^{n+m+1} s_{n+m-j}(\Psi(w)) \Psi'(w)}{h(w)}, \quad j=0,\ldots, m-1.$$
The functions $\alpha_n(w)$ and $w^{-j}\eta_{n,j}(w)=h_{n+m-j}(w)/h(w),$ $j=1,\ldots,m-1,$ are holomorphic in $U$, and
$$ \alpha(w):=\lim_{n \rightarrow \infty} \alpha_n(w)=w^{-m} h(w) \prod_{j=1}^m (\Psi(w)-\lambda_j),$$
$$\eta_j(w):=\lim_{n\rightarrow \infty} \eta_{n,j} (w)=w^{j}, \quad j=0,1, \ldots, m-1,$$ uniformly inside $U\setminus \{\infty\}.$ Since $h(w)$ is never zero in $U$, $\alpha(w)$ has at most $m$ zeros in $U\setminus\{\infty\}.$
 By Cauchy's integral formula, Fubini's theorem, and the definition of $Q_{n,m}^{\mu}$, we have, for $\epsilon>0$ sufficiently small so that $F(z)$ is analytic on $D_{1+\varepsilon}$, and for  $j=0,\ldots, m-1,$
$$[f \alpha_n \eta_{n,j}]_n=\frac{c_n}{2 \pi i} \int_{\gamma_{1+\varepsilon}} F(\Psi(w))Q_{n,m}^{\mu}(\Psi(w)) s_{n+m-j}(\Psi(w))\Psi'(w)dw$$
$$=\frac{c_n}{2 \pi i} \int_{\Gamma_{1+\varepsilon}} F(t) Q_{n,m}^{\mu}(t) s_{n+m-j}(t) dt= \frac{c_n}{2 \pi i}\int_{\Gamma_{1+\varepsilon}} F(t) Q_{n,m}^{\mu}(t) \int \frac{\overline{p_{n+m-j}(z)}}{t-z} d\mu(z) dt$$
$$= c_n\int \frac{1}{ 2\pi i} \int_{\Gamma_{1+\varepsilon}}  \frac{F(t) Q_{n,m}^{\mu}(t)}{t-z}    dt \overline{p_{n+m-j}(z)} d\mu(z)=c_n\int F(z)Q_{n,m}^{\mu}(z) \overline{p_{n+m-j}(z)}d\mu(z) =0.$$
Therefore, the assumptions of Lemma \ref{Buslaev3} are satisfied.
If the regular part of $f(w)$ is a  rational function with at most $m-1$ poles, then $F(z)$ is a rational function with at most $m-1$ poles which implies that
 $\Delta_{n,m}(F,\mu)=0$ for $n$ sufficiently large. This is impossible, because $\deg(Q_{n,m}^{\mu})=m,$ for $n$ sufficiently large.
Therefore,  by Lemma \ref{Buslaev3}, $\alpha(w)$ has precisely $m$ zeros $\tau_1,\ldots, \tau_m$ in $U\setminus \{\infty\}$ and the limits of the poles of the classical Pad\'{e} approximants $[n/m]_{\hat{f}}(w)$ are $\tau_1,\ldots, \tau_m,$ as $n \rightarrow \infty$.

Now, we prove that $(a)$ implies $(b)$ using Lemma \ref{Buslaev2}. Assume that the poles of $[n/m]_{\hat{f}}(w)$ have limits $\tau_1,\ldots,\tau_m$, as $n \rightarrow \infty$.
 We assume further that $Q_{n,m}(w)$ is monic.

Define, for $w\in U,$
$$\tilde{\alpha}_n(w):= w^{-m} Q_{n,m}(w),$$
$$\tilde{\eta}_{n,\nu}(w):= w^{\nu}, \quad \nu=0,\dots, m-1.$$
Then,
$$\tilde{\alpha}(w):=\lim_{n \rightarrow \infty} \tilde{\alpha}_n(z)=w^{-m} \prod_{j=1}^{m} (w-\tau_j),$$
$$\tilde{\eta}_{\nu}(w)= w^{\nu}, \quad \nu=0,\dots, m-1,$$
uniformly inside $U\setminus\{\infty\}.$ By the definition of $Q_{n,m}(z)$, it follows that, for $\epsilon>0$ sufficiently small so that $f(w)$ is holomorphic on $\gamma_{1+\varepsilon}$ and for $n$ sufficiently large,
\begin{align*}
[f \tilde{\alpha}_n \tilde{\eta}_{n,\nu}]_n=[\hat{f} \tilde{\alpha}_n \tilde{\eta}_{n,\nu}]_n= \frac{1}{2 \pi i}\int_{\gamma_{1+\varepsilon}} \frac{ \hat{f}(w) Q_{n,m}(w)}{w^{m-\nu+n+1}} dw=0,\quad  \nu=0,\ldots, m-1.
\end{align*}
We can easily check the rest of the conditions required in Lemma \ref{Buslaev2} for $\tilde{\alpha}_n(w)$ and $\tilde{\eta}_{n,\nu}(w),$ so we can apply the equality (\ref{buslaevlemma2.3}) in Lemma \ref{Buslaev2}.

Next, set
 \begin{equation}\label{qnmcircle}
 \tilde{Q}_{n,m}(z):=
 \begin{vmatrix}
 c_{n+1} \langle F,p_{n+1} \rangle_{\mu} & c_{n+1} \langle zF,p_{n+1} \rangle_{\mu} & \cdots & c_{n+1}  \langle z^m F,p_{n+1} \rangle_{\mu}\\

   \vdots   & \vdots   & \cdots    & \vdots   \\

 c_{n+m} \langle F,p_{n+m} \rangle_{\mu} & c_{n+m} \langle zF,p_{n+m} \rangle_{\mu} &  \cdots & c_{n+m} \langle z^{m} F,p_{n+m} \rangle_{\mu} \\
  1            & z            &  \cdots  &  z^{m}     \\
 \end{vmatrix}.
 \end{equation}
\noindent Note that the polynomials $\tilde{Q}_{n,m}(z)$ satisfy
\begin{equation}\label{orthpad}
\langle \tilde{Q}_{n,m} F ,p_\nu \rangle_{\mu}=0, \quad \nu=n+1,\ldots, n+m,
\end{equation} and
if we show that $\Delta_{n,m}(F,\mu)\not=0$ (the coefficient of $\tilde{Q}_{n,m}(z)/\prod_{j=1}^m c_{n+j}$), which will be verified at the end of this proof, then ${Q}_{n,m}^{\mu}(z)$ is unique and
 $$Q_{n,m}^{\mu}(z)=\frac{\tilde{Q}_{n,m}(z)}{\Delta_{n,m}(F,\mu)\prod_{j=1}^m c_{n+j}}.$$
Using Cauchy's integral formula and Fubini's theorem, for $\varepsilon>0$ sufficiently small so that $F(z)$ is holomorphic on $D_{1+\varepsilon}$, for $j=1,\ldots,m+1,$ and $\nu=1,\ldots,m$, we have
 \begin{align*}
&c_{n+\nu} \langle z^{j-1} F, p_{n+\nu} \rangle_{\mu} = c_{n+\nu} \int \frac{1}{2 \pi i} \int_{\Gamma_{1+\varepsilon}} \frac{\zeta^{j-1} F(\zeta)}{\zeta-z} d\zeta \overline{p_{n+\nu}(z)} d\mu(z) \notag\\
                 & = \frac{c_{n+\nu} }{2 \pi i}  \int_{\Gamma_{1+\varepsilon}} \zeta^{j-1} F(\zeta) \int \frac{\overline{p_{n+\nu}(z)}}{\zeta-z}  d\mu(z) d\zeta= \frac{c_{n+\nu} }{2 \pi i} \int _{\Gamma_{1+\varepsilon}} \zeta^{j-1} F(\zeta) s_{n+\nu}(\zeta) d\zeta \notag\\
                  &=\frac{c_{n+\nu} }{2 \pi i} \int_{\gamma_{1+\varepsilon}} \Psi^{j-1}(w) f(w)s_{n+\nu}(\Psi(w))\Psi'(w)dw = [ f(w)w^{-\nu} h_{n+\nu}(w)   \Psi^{j-1}(w)  ]_n.
  \end{align*}
 Computing the determinant in (\ref{qnmcircle}) expanding along the last row and applying the previous formula, we obtain
  \begin{equation}\label{lk}
  \tilde{Q}_{n,m}(z )= \sum_{k=0}^m (-1)^{m+k} z^k \det([f K_{n,t} L_{n,r} ] _n)_{t=1,\ldots, m,\, r=1,\ldots,k, k+2, \ldots,m+1},
  \end{equation}
where $$K_{n,t} (w):=w^{-t} h_{n+t}(w), \quad t=1,\ldots, m,$$
  $$L_{n,r}(w):=\Psi^{r-1}(w), \quad r=1,\ldots,m+1.$$
Moreover, all the functions $K_{n,t}(w)$ and $L_{n,r}(w),$ are holomorphic in $U\setminus\{\infty\},$ and
 $$K_t (w):=\lim_{n\rightarrow \infty}K_{n,t}(w)=w^{-t} h(w),\quad t=1,\ldots, m,$$
 $$L_{r}(w):= \Psi^{r-1}(w), \quad r=1,\ldots,m+1,$$
uniformly inside $U\setminus\{\infty\}$.
  By Lemma \ref{Buslaev2} and (\ref{lk}), we have that $\tau_1, \ldots, \tau_m \in U$ and
  \begin{align}
 &\lim_{n \rightarrow \infty} \frac{\tilde{Q}_{n,m}(z)}{\det(f_{n-i-j})_{i,j=0,1,\ldots, m-1}} \notag\\
=&\lim_{n \rightarrow \infty} \sum_{k=0} ^m (-1)^{m+k}  z^k \frac{\det([fK_{n,t} L_{n,r}]_n)_{t=1,\ldots,m,\,\,r=1,\ldots,k,k+2,\ldots,m+1}}{\det(f_{n-i-j})_{i,j=0,1,\ldots, m-1}}  \notag\\
  =& \sum_{k=0} ^m (-1)^{m+k}  z^k \frac{\det(K_r(\tau_t))_{t,r=1,\ldots, m}\det(L_r(\tau_t))_{ t=1,\ldots,m, \,\, r=1,\ldots,k,k+2,\ldots,m+1}}  {W^2(\tau_1,\tau_2,\ldots,\tau_m)} \notag\\
  =& \frac{\det(K_r(\tau_t))_{r,t=1,2,\ldots, m}}{W^2(\tau_1,\tau_2,\ldots,\tau_m)}
  \begin{vmatrix}
  1& \Psi(\tau_1) &  \cdots & \Psi^m(\tau_1)  \\
   \vdots   & \vdots    & \vdots &  \vdots  \\
  1 & \Psi(\tau_m)&  \cdots &  \Psi^m(\tau_m) \\
  1            & z            &  \cdots  &  z^m     \\
 \end{vmatrix} \notag\\=&(-1)^{(m)(m-1)/2}\frac{\prod_{i=1}^m h(\tau_i)}{\prod_{i=1}^m \tau_i^m} \prod_{1 \leq i<j \leq m}\left(\frac{\Psi(\tau_{j})-\Psi(\tau_i)}{\tau_j-\tau_i} \right) z^m+\ldots,\notag
\end{align}
where ${W(\tau_1,\tau_2,\ldots,\tau_m)}=\det(\tau_t^{r-1})_{t,r=1,\ldots,m}$ is the Vandermonde determinant of the numbers $\tau_1,\ldots, \tau_m.$ Since the degree of the polynomial in the last expression is $m,$ the degree of $\tilde{Q}_{n,m}(z)$ is $m$ for all $n$ sufficiently large. This implies that $\Delta_{n,m}(F,\mu)\not=0$ and $Q_{n,m}^{\mu}(z)=\tilde{Q}_{n,m}(z)/(\Delta_{n,m}(F,\mu)\prod_{j=1}^m c_{n+j}).$ Moreover, the zeros of the polynomial in the second last equality are $\lambda_1,\ldots,\lambda_m$, so the zeros of $\tilde{Q}_{n,m}(z)$ (and $Q_{n,m}^{\mu}(z)$) converge to $\lambda_1,\ldots,\lambda_m,$ as $n\rightarrow \infty.$
\end{proof}

\section{Acknowledgement}
The first author would like to thank Brian Simanek for very useful discussions and references.

\noindent Nattapong Bosuwan, Edward B. Saff\\
Center for Constructive Approximation\\
         Department of Mathematics\\
         Vanderbilt University\\
         1326 Stevenson Center\\
         37240 Nashville, TN, USA\\
         \noindent email: nattapong.bosuwan@vanderbilt.edu,
 edward.b.saff@vanderbilt.edu \\

\noindent G. L\'{o}pez Lagomasino\\
Departamento de Matem\'aticas \\
Universidad Carlos III de Madrid \\
c/ Avda. de la Universidad, 30\\
28911, Legan\'es, Spain\\
\noindent email: lago@math.uc3m.es

\end{document}